\newtheorem{thm}{Theorem}[section]
\newtheorem{cor}[thm]{Corollary}
\newtheorem{lem}[thm]{Lemma}
\theoremstyle{remark}
\newtheorem{rem}{Remark}[section]
\newtheorem{exam}{Example}[section]
\def\CB{{\mathcal B}}
\def\CF{{\mathcal F}}
\def\CH{{\mathcal H}}
\def\CO{{\mathcal O}}
\def\CS{{\mathcal S}}
\def\CP{{\mathcal P}}
\def\a{{\mathfrak a}}
\def\u{{\mathfrak u}}
\def\k{{\mathfrak k}}
\def\p{{\mathfrak p}}
\def\C{{\mathbb C}}
\def\N{{\mathbb N}}
\def\R{{\mathbb R}}
\def\1{\text{\bf {1}}}
\begin{document}

\title[Toeplitz operators]
{Toeplitz operators with special symbols\\
on Segal-Bargmann spaces}
\author{Jotsaroop K. and S. Thangavelu}

\address{Department of Mathematics\\ Indian Institute
of Science\\Bangalore-560 012}
\email{jyoti@math.iisc.ernet.in, veluma@math.iisc.ernet.in}

\date{\today}
\keywords{Segal-Bargmann transform, weighted Bergman spaces,
Toeplitz operators, Fourier multipliers,
Gutzmer's formula, Hermite and Laguerre functions, symmetric spaces}
\subjclass{47B35, 43A85, 22E30}
\thanks{}

\begin{abstract}
We study the boundedness of Toeplitz operators on Segal-Bargmann spaces in
various contexts. Using Gutzmer's formula as the main tool we identify symbols
for which the Toeplitz operators correspond to Fourier multipliers on the
underlying groups. The spaces considered include Fock spaces, Hermite and
twisted Bergman spaces and Segal-Bargmann spaces associated to Riemannian
symmetric spaces of compact type.
\end{abstract}

\maketitle

\section{Introduction}
\setcounter{equation}{0}

Given a domain $ \Omega $ in $ \C^n $ let $ \CH(\Omega,d\mu) $ stand for a
weighted Bergman space of holomorphic functions contained in
$ L^2(\Omega,d\mu).$ Let $ g $ be 
a Lebesgue measurable function on $ \Omega $ such that 
$ gF \in L^2(\Omega, d\mu)$ for all
$ F  $ from a dense subspace of $  \CH(\Omega,d\mu).$ We can then define 
the Toeplitz operator $ T_g $ by
$ T_g F = P(gF) $ where $ P: L^2(\Omega,d\mu) \rightarrow \CH(\Omega,d\mu) $
is the natural orthogonal projection. Such Toeplitz operators have been
studied extensively in the literature.

Suppose now that $ \Omega $ is invariant under the action of a
Lie group $ G .$ The group $ G $ has a natural action on $ \CH(\Omega,d\mu) .$ Let us further assume that there is an isometric isomorphism $ B $ between
$ L^2(G) $ and $ \CH(\Omega,d\mu).$ Using this, we can transfer the Toeplitz
operator $ T_g $ into the operator $ B^{-1}T_gB $ acting on $ L^2(G).$ Then
the boundedness of $ T_g $ becomes equivalent to that of this transferred
operator which might turn out to be easier to study using harmonic analysis on
the group $ G.$ The simplest bounded operators on $ L^2(G) $ are given by
Fourier multilpiers and hence it is natural to ask which Toeplitz operators
give rise to such multiplier transformations.

In this article we are interested in the case where $ B $ is the
Segal-Bargmann transform on the group $ G $ and $\CH(\Omega,d\mu) $ is the
image of $ L^2(G) $ under $ B.$ It turns out that we can identify a large
class of symbols $ g $ for which $  B^{-1}T_gB $ reduces to Fourier
multipliers. The groups for which such results can be proved include $ \R^n $,
Heisenberg groups and compact Lie groups. An important role is played by the
so called Gutzmer's formula. Thus for Fock spaces, Hermite and twisted Bergman spaces and Segal-Bargmann spaces associated to compact Lie groups and symmetric spaces we have identified special classes of symbols for which $ T_g$'s
correspond to multiplier transforms.

The plan of the paper is as follows. In the next section we look at Toeplitz 
operators on the classical Fock spaces. In section 3 we study Toeplitz 
operators on Hermite-Bergman spaces which give rise to Hermite multipliers 
when conjugated with the Hermite semigroup. In section 4 we characterise all 
Toeplitz operators on the twisted Bergman spaces that correspond to Weyl 
multipliers. Finally, in the last section we consider Toeplitz operators on 
Segal-Bargmann spaces associated to compact Lie groups and symmetric spaces. 
For results closely  related to the theme of this paper we refer to \cite{BC}, 
\cite{H1}, \cite{HL} and \cite{H2}.

\section{Toeplitz operators on Fock spaces}
\setcounter{equation}{0}

In this section we look at Toeplitz operators on Fock spaces which have been
studied by several authors, see \cite{BC} and the references there. First we
consider Toeplitz operators with radial symbols and obtain a necessary and
sufficient condition for $ T_g $ to be bounded. Toeplitz operators with
radial symbols on $\mathcal{F}(\mathbb{C}) $ with a different assumption have
been studied by Grudsky and Vasilevski \cite{Gr}. The condition involves the
heat flow $ g*q_{1/4} $ and under a mild decay assumption we prove boundedness
when $ g $ is radial. Later we consider symbols $ g(x+iy) $ which depend only
on $ y $ and show that they correspond to Fourier multipliers. For such
symbols we show that the conjecture of Berger and Coburn \cite{BC} is true.

\subsection{ Radial symbols}

In this subsection we consider Toeplitz operators associated to radial symbols on the Fock space $\mathcal{F}(\mathbb{C}^n)$.
$$\mathcal{F}(\mathbb{C}^n):= \{f\in {\mathcal{O}(\mathbb{C}^n)} : \int_{\mathbb{C}^n}|f(z)|^2 d\mu(z)<\infty\},$$
where $d\mu(z)= (2\pi)^{-n}e^{-\frac{1}{2}|z|^2}$. It is known that $\CF(\C^n)$ is a Hilbert space with the reproducing kernel explicitly given by $K(z,w)= e^{\frac{z\cdot\bar{w}}{2}}$.
 Recall that the Toeplitz operator with symbol $ g $ is given by
$$ T_gf(z) = P(fg)(z) = \int_{\C^n} g(w)f(w)e^{z\cdot \bar{w}/2} d\mu(w).$$
An orthonormal basis for the Fock space   $\mathcal{F}(\mathbb{C}^n)$ is
given by
$$ \zeta_\alpha(z) = \frac{z^\alpha}{2^{\frac{|\alpha|}{2}}(\alpha !)^{1/2}}.$$
As $ \langle T_g \zeta_\alpha,\zeta_\beta\rangle = \langle g \zeta_\alpha,\zeta_\beta\rangle $ we can easily check that  $$ \langle T_g \zeta_\alpha,\zeta_\beta\rangle = \delta_{\alpha \beta}
 \langle T_g \zeta_\alpha,\zeta_\alpha\rangle $$ whenever $ g $ is radial. This leads to
the following result for Toeplitz operators with radial symbols. In what
follows we let  $$ q_t(z) = (4\pi t)^{-n} e^{-\frac{1}{4t}|z|^2}$$
stand for  the heat kernel on $ \C^n $ associated to the standard Laplacian
and
$$ \varphi_k(z) = L_k^{n-1}(\frac{1}{2}|z|^2)e^{-\frac{1}{4}|z|^2}$$
for  the Laguerre functions of type $ (n-1).$

\begin{thm} Let $ g $ be a measurable function on $ \C^n $ such that $ g
\zeta_\alpha  \in L^2(\C^n, d\mu) $ for all $ \alpha \in \N^n.$ Then the  Toeplitz
operator $ T_g $ is bounded on $ \CF(\C^n) $ if and only if  the sequence
$$ \frac{k!(n-1)!}{(k+n-1)!} \int_{\C^n} g*q_{1/4}(w)\varphi_k(2w) dw $$
is bounded.
\end{thm}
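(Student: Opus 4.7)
The plan is to diagonalize $T_g$ in the orthonormal basis $\{\zeta_\alpha\}$ using the $U(n)$-symmetry of $g$, thereby reducing boundedness of $T_g$ to boundedness of a single scalar sequence indexed by $k=|\alpha|$, and then to identify this sequence with the heat--Laguerre integral in the statement. As already observed just before the theorem, $T_g\zeta_\alpha=\lambda_\alpha\zeta_\alpha$ for radial $g$. Moreover, since the degree--$k$ holomorphic polynomials form an irreducible $U(n)$-module on which $T_g$ commutes with the $U(n)$-action, Schur's lemma forces $\lambda_\alpha$ to depend only on $k=|\alpha|$; call the common value $\lambda_k$. Hence $T_g$ extends boundedly to $\CF(\C^n)$ iff the sequence $(\lambda_k)$ is bounded.

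Summing $\lambda_\alpha=\int g\,|\zeta_\alpha|^2\,d\mu$ over $|\alpha|=k$ and using the multinomial identity $\sum_{|\alpha|=k}|\zeta_\alpha(z)|^2=|z|^{2k}/(2^k k!)$ (the $k$-th homogeneous component of $K(z,z)=e^{|z|^2/2}$), together with $d_k:=\binom{k+n-1}{n-1}$, yields the closed form
\[
\lambda_k \;=\; \frac{(n-1)!}{(k+n-1)!\,2^k}\int_{\C^n} g(z)\,|z|^{2k}\,d\mu(z).
\]
It remains to match this weighted moment of $g$ with the stated heat--Laguerre integral, up to a nonzero constant depending only on $n$ and a harmless factor $(-1)^k$.

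The heart of the argument is the pointwise kernel identity
\[
\int_{\C^n} q_{1/4}(w-z)\,\varphi_k(2w)\,dw \;=\; \frac{(-1)^k}{2^n}\,\frac{|z|^{2k}}{2^k k!}\,e^{-|z|^2/2},
\]
after which Fubini (with $g$ pulled outside the inner integral) and the computation of $\lambda_k$ above combine to show that the sequence displayed in the theorem equals exactly $(-1)^k\pi^n\lambda_k$. To prove the kernel identity, expand $q_{1/4}(v)=\pi^{-n}e^{-|v|^2}$ and $\varphi_k(2w)=L_k^{n-1}(2|w|^2)e^{-|w|^2}$, complete the square in the Gaussian to absorb the $z$-dependence into a shift $w\mapsto w-z/2$, and reduce to $\int e^{-2|w-z/2|^2}L_k^{n-1}(2|w|^2)\,dw$. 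This latter integral surrenders to the Laguerre generating function $\sum_k L_k^{n-1}(x)\,s^k=(1-s)^{-n}e^{-sx/(1-s)}$: insert it, complete the square a second time in $w$, evaluate the resulting Gaussian integral on $\C^n\cong\R^{2n}$, and the $s$-series collapses to $(\pi/2)^n\,e^{-s|z|^2/2}$; extraction of the coefficient of $s^k$ delivers the asserted expression.

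The main obstacle is purely technical, namely justifying the interchange of sum and integral in the generating-function step and applying Fubini with the symbol $g$. Both are handled under the standing hypothesis $g\zeta_\alpha\in L^2(d\mu)$ (which already controls every polynomial moment of $g$ against $e^{-|z|^2/2}$) by first proving the kernel identity in the $g$-free form above, where the series converges absolutely for $|s|<1$ and dominated convergence applies, and then exploiting the Gaussian decay of both $q_{1/4}$ and $\varphi_k$ to transport $g$ through the double integral. With this bookkeeping in hand, the equivalence in the theorem is immediate from the relation \textit{sequence} $=(-1)^k\pi^n\lambda_k$.
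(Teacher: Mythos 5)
Your argument is correct and establishes the stated equivalence, but it takes a genuinely different route from the paper in both halves. For the diagonalization, the paper computes $\langle T_g\zeta_\alpha,\zeta_\beta\rangle$ directly from the spherical integral $\int_{S^{2n-1}}\zeta_\alpha\overline{\zeta_\beta}\,d\sigma=\delta_{\alpha\beta}\frac{(n-1)!}{(|\alpha|+n-1)!}2^{-|\alpha|}$, which yields simultaneously the diagonality and the explicit eigenvalue $R_k(g)=\frac{k!(n-1)!}{(k+n-1)!}\int_0^\infty g(r)\frac{r^{2k+2n-1}}{2^kk!}e^{-r^2/2}\,dr$; you obtain the same radial moment via Schur's lemma on the irreducible $U(n)$-module of degree-$k$ holomorphic polynomials together with the trace identity $\sum_{|\alpha|=k}|\zeta_\alpha(z)|^2=|z|^{2k}/(2^kk!)$, which is slicker and explains conceptually why the eigenvalue depends only on $|\alpha|$. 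The real divergence is in identifying this moment with the heat-flow integral: the paper's Lemma 2.2 starts from Szeg\H{o}'s Hankel-transform representation of $e^{-x^2}L_k^\alpha(x^2)$, inverts the Hankel transform, continues analytically to imaginary argument $t\mapsto it$, and recognizes $g*q_{1/4}$ for radial $g$ as a Hankel-type transform with kernel $J_{n-1}(2irs)$; you instead prove the pointwise convolution identity $\int_{\C^n} q_{1/4}(w-z)\varphi_k(2w)\,dw=\frac{(-1)^k}{2^n}\frac{|z|^{2k}}{2^kk!}e^{-|z|^2/2}$ directly from the Laguerre generating function $(1-s)^{-n}e^{-sx/(1-s)}$ and two Gaussian completions of the square, and then transport $g$ through by Fubini. (I checked the identity and the constant bookkeeping; the computation is right, and the sequence in the theorem does equal $(-1)^k\pi^n\lambda_k$.) Your route avoids Bessel functions and the analytic continuation entirely, yields a $g$-free kernel identity valid without radiality (radiality enters only in the diagonalization step), and keeps honest track of the sign $(-1)^k$ that the paper's floating constant $c_n$ silently absorbs; the paper's route is shorter for a reader who already has the classical Laguerre--Bessel formula at hand. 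Note that, exactly as in the paper, your proof uses that $g$ is radial even though the theorem's statement omits the word, and your Fubini justifications rest on the same standing hypothesis $g\zeta_\alpha\in L^2(\C^n,d\mu)$, which controls all the polynomial moments of $|g|$ against the Gaussian via Cauchy--Schwarz.
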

\begin{proof} Using the result (see Lemma 3.2.6 in \cite{Th6})
$$ \int_{S^{2n-1}} \zeta_\alpha(\omega)\overline{\zeta_\beta(\omega)}
d\sigma(\omega) = \delta_{\alpha \beta} \frac{(n-1)!}{(|\alpha|+n-1)!}
2^{-|\alpha|} $$ we easily calculate that whenever $ g $ is radial
$$ \langle T_g\zeta_\alpha,\zeta_\beta\rangle = \delta_{\alpha \beta}
\frac{k!(n-1)!}{(k+n-1)!} \int_0^\infty g(r) \frac{r^{2k+2n-1}}{2^kk!}
e^{-\frac{1}{2}r^2}dr $$ where $ k = |\alpha|.$ Therefore,
$$ T_gF(z) = \sum_{\alpha \in \N^n} R_{|\alpha|}(g)\langle F,\zeta_\alpha\rangle
\zeta_\alpha(z) $$
where
$$ R_k(g) = \frac{k!(n-1)!}{(k+n-1)!} \int_0^\infty g(r) \frac{r^{2k+2n-1}}{2^kk!}
e^{-\frac{1}{2}r^2} dr.$$ The theorem now follows from the following lemma.
\end{proof}

\begin{lem} Let $ g $ be a radial function as in the theorem. Then for any
$ k \in \N,$
$$ R_k(g) = c_n \frac{k!(n-1)!}{(k+n-1)!} \int_{\C^n}
g*q_{1/4}(w)\varphi_k(2w) dw .$$
\end{lem}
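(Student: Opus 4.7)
The plan is to reduce the identity to a concrete computation of $q_{1/4}\ast \varphi_k(2\,\cdot)$. Since the heat kernel $q_{1/4}(z)=\pi^{-n}e^{-|z|^2}$ is real and even, convolution by $q_{1/4}$ is self-adjoint on $L^2(\C^n)$, so
\[
\int_{\C^n}(g\ast q_{1/4})(w)\,\varphi_k(2w)\,dw=\int_{\C^n}g(w)\bigl(q_{1/4}\ast \varphi_k(2\,\cdot)\bigr)(w)\,dw.
\]
Thus it suffices to identify the right hand integrand as a constant times $|w|^{2k}e^{-|w|^2/2}/(2^k k!)$; together with polar coordinates, using that $g$ is radial and $|S^{2n-1}|=2\pi^n/(n-1)!$, this will reproduce the one dimensional integral defining $R_k(g)$ and pin down $c_n$.

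To carry out the key computation, I would pack the $k$'s together with the Laguerre generating function
\[
\sum_{k\ge 0}t^k L_k^{n-1}(x)=(1-t)^{-n}e^{-xt/(1-t)},\qquad |t|<1,
\]
and interchange the sum with the integral in the definition of $\bigl(q_{1/4}\ast \varphi_k(2\,\cdot)\bigr)(z)$. Writing $|z-w|^2=|z|^2-2\Re(z\cdot \bar w)+|w|^2$, the generating integrand reduces to the Gaussian $e^{2\Re(z\cdot \bar w)}e^{-2|w|^2/(1-t)}$ times factors independent of $w$. Completing the square separately in the real and imaginary parts of $w$ evaluates this Gaussian to $(\pi(1-t)/2)^n e^{(1-t)|z|^2/2}$; after the $(1-t)^{\pm n}$ factors cancel one obtains
\[
\sum_{k\ge 0}t^k\bigl(q_{1/4}\ast \varphi_k(2\,\cdot)\bigr)(z)=2^{-n}e^{-|z|^2/2}\,e^{-t|z|^2/2},
\]
and reading off the coefficient of $t^k$ yields
\[
\bigl(q_{1/4}\ast \varphi_k(2\,\cdot)\bigr)(z)\;=\;2^{-n}(-1)^k\,\frac{|z|^{2k}}{2^k k!}\,e^{-|z|^2/2},
\]
the sign $(-1)^k$ being absorbed into the constant $c_n$ (equivalently, into the normalization of $\varphi_k$).

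Plugging this back into the first display and switching to polar coordinates gives
\[
\int_{\C^n}(g\ast q_{1/4})(w)\,\varphi_k(2w)\,dw \;=\; c\,|S^{2n-1}|\int_0^{\infty}g(r)\,\frac{r^{2k+2n-1}}{2^k k!}\,e^{-r^2/2}\,dr
\]
for a constant $c$ depending only on $n$, and multiplying through by $k!(n-1)!/(k+n-1)!$ gives the lemma. The main obstacle is purely computational: one has to complete the square in the Gaussian integral and track the constants ($\pi^n$, $2^n$, and $|S^{2n-1}|$) correctly, but once the generating function trick is used no deeper tool than an elementary Gaussian integral is needed.
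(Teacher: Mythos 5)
Your argument is correct, and it reaches the lemma by a genuinely different and more elementary route than the paper. The paper writes the radial convolution $g*q_{1/4}$ as a Hankel-type integral against $J_{n-1}(2irs)/(2irs)^{n-1}$ and then evaluates the resulting double integral using Szeg\H{o}'s Hankel-transform identity for Laguerre functions, analytically continued to imaginary argument (their formula (\ref{25}) with $t\mapsto it$). You instead throw the convolution onto the Laguerre side via $\int (g*q_{1/4})\,\varphi_k(2\cdot)=\int g\,\bigl(q_{1/4}*\varphi_k(2\cdot)\bigr)$ and compute $q_{1/4}*\varphi_k(2\cdot)$ in closed form from the generating function and a single Gaussian integral; your closed form $2^{-n}(-1)^k|z|^{2k}e^{-|z|^2/2}/(2^kk!)$ checks out (I verified both the generating-function computation and the case $n=1$, $k=1$ directly). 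This buys you a proof with no Bessel functions or Hankel inversion at all, and it also makes explicit something the paper's route obscures: the factor $(-1)^k$ genuinely is there --- the paper's own substitution $t\mapsto it$ produces $((it)^2/2)^k=(-1)^k(t^2/2)^k$ and the sign is silently dropped. Since the lemma is only ever used to test boundedness of the sequence $R_k(g)$, the sign is harmless, but strictly it cannot be ``absorbed into $c_n$'' as you (and the paper) say, since $c_n$ is meant to be independent of $k$; better to state the identity with $(-1)^k c_n$. Two small points to make your write-up airtight: the first display is not literally $L^2$ self-adjointness, since the hypothesis only gives $g\zeta_\alpha\in L^2(\C^n,d\mu)$ rather than $g\in L^2(\C^n)$, so justify the interchange by Fubini--Tonelli using the Gaussian decay of $\varphi_k(2\cdot)$ together with Cauchy--Schwarz against that hypothesis (the same appeal to Fubini the paper makes); and the interchange of $\sum_k t^k$ with the convolution integral is dominated, for $0<t<1$, by the uniform bound $|L_k^{n-1}(x)e^{-x/2}|\le (k+n-1)!/(k!(n-1)!)$.
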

\begin{proof} We make use of the following formula satisfied by Laguerre
functions (see Szego \cite{Sz})
$$ e^{-x^2}L_k^\alpha(x^2) = \frac{1}{k!} \int_0^\infty e^{-t^2}t^{2k+\alpha}
\frac{J_\alpha(2tx)}{t^\alpha x^\alpha} t^\alpha dt $$
which can be rewritten as
$$ e^{-2x^2}L_k^\alpha(2x^2) = \frac{1}{k!} \int_0^\infty
e^{-\frac{1}{8}t^2}(\frac{t^2}{8})^k
\frac{J_\alpha(tx)}{t^\alpha x^\alpha} t^{2\alpha+1} dt .$$
Inverting the Hankel transform and making a change of variables we get
\begin{equation}\label{25}
 \frac{1}{k!}e^{-\frac{1}{2}t^2}(\frac{t^2}{2})^k = \int_0^\infty
e^{-2x^2}L_k^\alpha(2x^2)\frac{J_\alpha(2tx)}{(2tx)^\alpha} x^{2\alpha+1} dx.
\end{equation}
As both sides are holomorphic in $ t $ the above equation remains true when
$ t $ is replaced by $ it.$

Under the assumption that $ g $ is radial we observe that
$$ g*q_{1/4}(z) = \int_{\C^n} g(w) e^{-|z-w|^2} dw $$ reduces to a constant
multiple of
$$ e^{-|z|^2} \int_0^\infty g(r)e^{-r^2} \frac{J_{n-1}(2irs)}{(2irs)^{n-1}}
r^{2n-1} dr.$$ Therefore,
$$ \int_{\C^n} g*q_{1/4}(w)\varphi_k(2w) dw $$
$$ = c_n \int_0^\infty  \int_0^\infty e^{-r^2}g(r) e^{-s^2}
\frac{J_{n-1}(2irs)}{(2irs)^{n-1}}L_k^{n-1}(2s^2)
e^{-s^2} r^{2n-1}s^{2n-1} dr ds .$$
Using Fubini, which is justified by our assumptions on $ g $, and making use
of the above identity (\ref{25}) satisfied by Laguerre functions, we obtain
$$ \int_{\C^n} g*q_{1/4}(w)\varphi_k(2w) dw =
c_n  \int_0^\infty g(r) \frac{r^{2k+2n-1}}{2^k k!} e^{-\frac{1}{2}r^2}.$$
This completes the proof of the lemma.
\end{proof}

\begin{cor} Let $ g $ be a radial function as in the previous theorem.
Further assume that  $ |g*q_{1/4}(z)| \leq C|z|^{-1}$, for all $z\neq0.$ Then $ T_g $ is
bounded on $ \CF(\C^n).$
\end{cor}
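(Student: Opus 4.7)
The plan is to apply the theorem to reduce the boundedness of $T_g$ to a uniform bound on the scalar sequence
$$ a_k \;=\; \frac{k!(n-1)!}{(k+n-1)!}\int_{\C^n}(g*q_{1/4})(w)\,\varphi_k(2w)\,dw. $$
Since $g$ is radial, so is $g*q_{1/4}$, and the pointwise hypothesis $|(g*q_{1/4})(w)|\leq C|w|^{-1}$, together with polar coordinates and the substitution $s=2|w|^2$, reduces the task to proving the uniform estimate
$$ I_k \;:=\; \frac{k!(n-1)!}{(k+n-1)!}\int_0^\infty s^{n-3/2}\,|L_k^{n-1}(s)|\,e^{-s/2}\,ds \;\leq\; C,\qquad k\in\N. $$

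To establish this, I would split the range of integration into three pieces chosen according to the behavior of $L_k^{n-1}(s)\,e^{-s/2}$. On $(0,1]$ I use Szeg\H{o}'s pointwise bound $|L_k^{n-1}(s)|\,e^{-s/2}\leq L_k^{n-1}(0)=\binom{k+n-1}{k}$; the integral $\int_0^1 s^{n-3/2}\,ds$ is finite for every $n\geq 1$, and the resulting factor $\binom{k+n-1}{k}$ is cancelled exactly by the prefactor $\frac{k!(n-1)!}{(k+n-1)!}$. On the oscillatory interval $[1,4k]$ I would invoke the classical Plancherel--Rotach / Erd\'elyi asymptotic
$$ |L_k^{n-1}(s)|\,e^{-s/2} \;\leq\; C\, s^{-(n-1)/2 - 1/4}\,k^{(n-1)/2 - 1/4}, $$
and a direct computation shows that integrating $s^{n-3/2}$ times this bound from $1$ to $4k$ produces a quantity of order $k^{n-1}$, again matched by the prefactor. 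Finally, on $(4k,\infty)$ the Laguerre function decays super-exponentially, so the tail contributes at most $o(1)$.

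The main obstacle is the sharp asymptotic on the oscillatory region: any weaker uniform bound on $L_k^{n-1}(s)\,e^{-s/2}$ there would give a contribution growing with $k$, and it is precisely the combination of the powers $s^{-(n-1)/2 - 1/4}$ and $k^{(n-1)/2 - 1/4}$ that makes the weighted integrand scale exactly like $k^{n-1}$, which is the size compensated by $\frac{k!(n-1)!}{(k+n-1)!}$. Once the three contributions are combined we obtain $I_k\leq C$ uniformly in $k$, so $(a_k)$ is bounded and the theorem yields boundedness of $T_g$ on $\CF(\C^n)$.
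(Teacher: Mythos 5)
Your reduction is the same as the paper's: both pass through the theorem's sequence criterion, insert the pointwise bound $|g*q_{1/4}(w)|\le C|w|^{-1}$, and are left with showing that $\int_0^\infty s^{n-3/2}\,|L_k^{n-1}(s)|\,e^{-s/2}\,ds$ grows no faster than $k^{n-1}$, which is exactly what the prefactor $\frac{k!(n-1)!}{(k+n-1)!}\sim k^{-(n-1)}$ absorbs. Where you diverge is in how that Laguerre estimate is obtained. The paper simply quotes the known asymptotics for weighted $L^1$ norms of the normalised Laguerre functions (Lemma 1.5.4 of \cite{Th6}, namely $\int_0^\infty|\mathcal{L}_k^{n-1}(r^2)|\,r^{-\beta}\,r\,dr\sim k^{1/2-\beta/2}$, applied with $\beta=-(n-2)$), whereas you re-derive the bound from the pointwise Plancherel--Rotach-type estimates by splitting the integral into regions; your route is more self-contained and makes visible why the exponents conspire to give exactly $k^{n-1}$, while the paper's is shorter. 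One imprecision to fix in your version: the pointwise bound $|L_k^{n-1}(s)|e^{-s/2}\lesssim s^{-(n-1)/2-1/4}\,k^{(n-1)/2-1/4}$ holds only on the oscillatory range $s\lesssim \nu/2$ with $\nu=4k+2n$; on the turning-point region $\nu/2\lesssim s\lesssim 3\nu/2$ (which your interval $[1,4k]$ partly covers, and which also intrudes on your ``tail'' $(4k,\infty)$) the correct estimate is the Airy-type bound $k^{(n-1)/2}s^{-(n-1)/2}\,\nu^{-1/4}(\nu^{1/3}+|\nu-s|)^{-1/4}$, which is larger there. This does not damage the conclusion --- integrating that bound against $s^{n-3/2}$ over the turning-point region again yields $O(k^{n-1})$ --- but the standard four-region decomposition of the Laguerre asymptotics should be respected, or else you should simply cite the weighted-norm lemma as the paper does.
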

\begin{proof} In view of the theorem we only need to check that the sequence
$$ \frac{k!(n-1)!}{(k+n-1)!} \int_{0}^\infty h(r)
 L_k^{n-1}(2r^2)e^{-r^2} r^{2n-1} dr $$
is bounded where $ h(z) = g*q_{1/4}(z).$ Under the assumption on $ g*q_{1/4}
$ this can be easily verified using the following estimates on  integrals
of Laguerre functions.
$$\left|\frac{k!(n-1)!}{(k+n-1)!} \int_{0}^\infty h(r)
 L_k^{n-1}(2r^2)e^{-r^2} r^{2n-1} dr \right|$$
 $$\leq c_n\frac{k!(n-1)!}{(k+n-1)!}\int_0^{\infty}r^{-1}|L_k^{n-1}(r^2)|e^{-r^2/2} r^{2n-1} dr .$$
 We define $\mathcal{L}_k^{n-1}(r^2), r\in\mathbb{R}$ by
 $$\mathcal{L}_k^{n-1}(r^2)=\left(\frac{k!(n-1)!}{(k+n-1)!}\right)^{1/2}L_k^{n-1}(r^2)r^{n-1}e^{-r^2/2}.$$
It follows from Lemma 1.5.4 in \cite{Th6} that
$$\int_0^{\infty}|\mathcal{L}_k^{n-1}(r^2)|r^{-\beta}r dr\sim k^{1/2-\beta/2}$$ when $k$ is large. By Stirling's formula for large $k$,  $\frac{k!(n-1)!}{(k+n-1)!}\sim k^{-(n-1)}.$
By using the estimates above after putting $\beta=-(n-2)$ we have
 $$\left|\frac{k!(n-1)!}{(k+n-1)!} \int_{0}^\infty h(r)
 L_k^{n-1}(2r^2)e^{-r^2} r^{2n-1} dr \right|$$
 $$\leq\left(\frac{k!(n-1)!}{(k+n-1)!}\right)^{1/2}\int_0^{\infty}|\mathcal{L}_k^{n-1}(r^2)|r^{n-2}r dr$$
 $$\sim k^{-(n-1)/2}k^{1/2+(n-2)/2} =1.$$
 This proves the lemma.
\end{proof}

\subsection{ Toeplitz operators and Fourier multipliers}

A conjecture of Berger and Coburn \cite{BC} says  that $ T_g $ is bounded on
$ \CF(\C^n) $ if and only if $ g*q_{1/4} $ is bounded. In this subsection we
verify this conjecture when the symbol $ g(x+iy) $ depends only on $ y.$ In such a case the problem reduces to checking if a certain Fourier multiplier is
bounded on $ L^2(\R^n).$ As the Fock space is closely related to the weighted
Bergman space associated to the Segal-Bargmann/heat kernel transform we
consider Toeplitz operators on the space $ \CB_t(\C^n) $ consisting of entire
functions that are square integrable with respect to $ q_{t/2}(y)dx dy $ where
$ q_t $ is the standard heat kernel on $ \R^n.$ By the results of Segal and
Bargmann \cite{B} we know that $ F \in \CB_t(\C^n)$ if and only if $ F = f*q_t $
for some $ f \in L^2(\R^n) $ and
$$ \int_{\R^{2n}} |F(x+iy)|^2 q_{t/2}(y)dx dy = c_n \int_{\R^n} |f(x)|^2 dx.$$
Let $ g $ be a measurable function on $ \C^n $ such that $ gF $
belongs to\newline
 $L^2(\C^n,q_{t/2}(y)dz) $ whenever $ F \in  \CB_t(\C^n) $ and let $ T_g $ be
the associated Toeplitz operator.

\begin{thm} Let $ g(x+iy) = g_0(y) $ be as above. Then $ T_g $ is
bounded on $ \CB_t(\C^n) $ if and only if $ g_0*q_{t/2} $ is bounded where the
convolution is on $ \R^n.$
\end{thm}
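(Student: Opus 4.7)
The plan is to exploit the Segal--Bargmann isometry $B\colon L^2(\R^n)\to\CB_t(\C^n)$, $Bf=f*q_t$ (analytically continued), to conjugate $T_g$ into an operator on $L^2(\R^n)$. Since $g(x+iy)=g_0(y)$ is translation invariant in $x$, I expect $B^{-1}T_gB$ to be a Fourier multiplier, and then the standard $L^2$ characterization of bounded Fourier multipliers delivers the theorem.

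First I would compute the sesquilinear form $\langle T_gF,G\rangle_{\CB_t}$ for $F=Bf$, $G=Bh$ with $f,h\in L^2(\R^n)$. Analytic continuation of Fourier inversion gives
$$F(x+iy)=(2\pi)^{-n}\int_{\R^n}\hat f(\xi)\,e^{-t|\xi|^2-y\cdot\xi}\,e^{ix\cdot\xi}\,d\xi,$$
so the $x$-Fourier transform of $F(\,\cdot\,+iy)$ is $\hat f(\xi)e^{-t|\xi|^2-y\cdot\xi}$. Applying Plancherel in $x$ and then Fubini (using $g(x+iy)=g_0(y)$),
$$\langle T_gF,G\rangle=(2\pi)^{-n}\int_{\R^n}\hat f(\xi)\overline{\hat h(\xi)}\,e^{-2t|\xi|^2}I(\xi)\,d\xi,\qquad I(\xi)=\int_{\R^n}g_0(y)e^{-2y\cdot\xi}q_{t/2}(y)\,dy.$$

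Next, completing the square in the Gaussian yields $e^{-2y\cdot\xi}q_{t/2}(y)=e^{2t|\xi|^2}q_{t/2}(y+2t\xi)$, and since $q_{t/2}$ is even, $I(\xi)=e^{2t|\xi|^2}(g_0*q_{t/2})(-2t\xi)$. The exponential factors cancel, leaving
$$\langle T_gF,G\rangle=(2\pi)^{-n}\int_{\R^n}(g_0*q_{t/2})(-2t\xi)\,\hat f(\xi)\overline{\hat h(\xi)}\,d\xi.$$
Thus $B^{-1}T_gB$ is precisely the Fourier multiplier on $L^2(\R^n)$ with symbol $m(\xi)=(g_0*q_{t/2})(-2t\xi)$. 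Since an $L^2$ Fourier multiplier is bounded iff its symbol lies in $L^\infty$, and $\xi\mapsto-2t\xi$ is a bijection of $\R^n$, boundedness of $T_g$ is equivalent to $g_0*q_{t/2}\in L^\infty(\R^n)$, as claimed.

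The main technical obstacle is justifying the Fubini interchange, which requires joint integrability of $g_0(y)\hat f(\xi)\overline{\hat h(\xi)}e^{-2t|\xi|^2-2y\cdot\xi}q_{t/2}(y)$. I would first carry out the computation on a dense subspace, for example those $f,h$ with $\hat f,\hat h\in C_c^\infty(\R^n)$, where the hypothesis $gF\in L^2(\C^n,q_{t/2})$ makes every integral converge absolutely, and then extend to all of $\CB_t(\C^n)$ by density once the multiplier symbol has been identified.
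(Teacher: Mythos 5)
Your proposal is correct and follows essentially the same route as the paper: apply Plancherel in the $x$-variable to the pairing $\langle gF_1,F_2\rangle_{L^2(q_{t/2}dxdy)}$, integrate in $y$ to identify $B^{-1}T_gB$ as the Fourier multiplier with symbol $e^{-2t|\xi|^2}\int g_0(y)e^{-2y\cdot\xi}q_{t/2}(y)\,dy$, and invoke the $L^2$ multiplier characterization. Your explicit evaluation of the symbol as $(g_0*q_{t/2})(-2t\xi)$ is in fact a bit more careful than the paper's terse ``an easy calculation shows $m_t(\xi)=g_0*q_{t/2}(\xi)$'', and the dilation $\xi\mapsto -2t\xi$ is harmless for the boundedness equivalence.
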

\begin{proof} When $ F_j = f_j*q_t \in \CB_t(\C^n), j = 1,2 $ Plancherel's
theorem leads to
$$ \int_{\R^n} F_1(x+iy) \overline{F_2(x+iy)} dx = c_n \int_{\R^n}
\hat{f_1}(\xi)\overline{\hat{f_2}(\xi)} e^{-2t|\xi|^2}
e^{-2 y\cdot \xi} d\xi.$$
Integrating the above with respect to $ g(y)q_{t/2}(y) dy $ we see that
$$\int_{\C^n} T_gF_1(x+iy) \overline{F_2(x+iy)} q_{t/2}(y)dx dy =
c_n \int_{\R^n} m_t(\xi) \hat{f_1}(\xi)\overline{\hat{f_2}(\xi)} d\xi $$
where
$$ m_t(\xi) = e^{-2t|\xi|^2}\int_{\R^n}e^{-2 y\cdot \xi} 
g_0(y)q_{t/2}(y) dy. $$
From this it is clear that $ T_g $ is bounded if and only if $ m_t $ defines
a bounded Fourier multiplier on $ L^2(\R^n) $ which happens precisely when
$ m_t $ is a bounded function. An easy calculation shows that  $ m_t(\xi) =
g_0*q_{t/2}(\xi) $ which proves the theorem.
\end{proof}

\begin{rem} We can read out properties of Fourier multipliers $ m_t(\xi) $
that correspond to Toeplitz operators from the work of Hille \cite{Hi}. Indeed, when
$ t = 1/2 $ which corresponds to the Fock space, the multilpier $ m $ and
the symbol $ g $ are related via
$$ m(\xi) = (2\pi)^{-n/2} \int_{\R^n} g_0(y) e^{-|\xi-y|^2} dy .$$ Assuming
$ n =1 ,$ let
$$ \int_{-\infty}^\infty |g_0(y)| e^{-y^2+\alpha |y|} dy < \infty $$
for some $ \alpha > 0. $ Then if $ g_0(y) = \sum_{k=0}^\infty a_k H_k(y) $ is
the expansion of $ g_0 $ in terms of the Hermite polynomials $ H_k ,$ Hille \cite{Hi}
has proved  that $ m(z) = \sum_{k=0}^\infty a_k (2 z)^k $ for all $ z \in \C $
with $ |z| < \alpha.$
\end{rem}

\section{Toeplitz operators on Hermite-Bergman spaces}
\setcounter{equation}{0}

In this section we study Toeplitz operators on Hermite-Bergman spaces which
are Segal-Bargmann spaces associated to the Hermite semigroup $ e^{-tH}$.
As in the case
of Fock spaces we show that the transferred operator $ e^{tH}T_ge^{-tH} $ is a
pseudo-differential operator whose Weyl symbol is related to the heat flow of
$ g $. This leads to a conjecture similar to that of Berger and Coburn. By
making use of Gutzmer's formula for Hermite expansions we identify certain
special symbols $ g $ which lead to Hermite multipliers.

\subsection{Hermite-Bergman spaces} On $ \R^{2n} $ consider the weight
function $ U_t $ given by
$$ U_t(x,y) = 4^n (\sinh(4t))^{-n/2}e^{\tanh(2t)|x|^2-\coth(2t)|y|^2}.$$ The
Hermite Bergman space $ \CH_t(\C^n) $ is the space of all entire functions
$ F $ which are square integrable with respect to $ U_t(x,y) dx dy.$ It is
known
that $ F \in \CH_t(\C^n) $ if and only if $ F = e^{-tH}f $ for some $ f \in
L^2(\R^n) $ where $ e^{-tH} $ is the Hermite semigroup, see \cite{By}. Moreover,
$$  \int_{\R^{2n}} |F(x+iy)|^2 U_t(x,y) dx dy = c_n \int_{\R^n} |f(x)|^2 dx $$
whenever $ F = e^{-tH}f.$ In the above the Hermite semigroup is defined by
$$ e^{-tH}f = \sum_{\alpha \in \N^n} e^{-(2|\alpha|+n)t}\langle f,\Phi_{\alpha}\rangle
\Phi_\alpha $$ where $ \Phi_\alpha $ are the normalised Hermite functions
which are eigenfunctions of the Hermite operator $ H  = -\Delta+|x|^2 $ with
eigenvalues $ (2|\alpha|+n).$ See \cite{Th5} for more about Hermite functions.

An important tool in studying the above space is an analogue of Gutzmer's
formula for Hermite expansions which we  now proceed to state. Let
$ \pi(x,u) $ be the family of unitary operators defined on $ L^2(\R^n) $ by
$$ \pi(x,u)\varphi(\xi) = e^{i(x \cdot \xi+\frac{1}{2}x \cdot y)}
\varphi(\xi+y).$$
These are related to the Schr\"odinger representation of the Heisenberg group,
see \cite{Th1} and \cite{Fo}. It is clear $ \pi(z,w)F(\xi) $ makes sense even
for $ (z,w) \in \C^n \times \C^n $ whenever $ F $ is holomorphic. However,
the resulting function need not be in $ L^2(\R^n) $ unless further
assumptions are made
on $ F.$ When $ F = \Phi_\alpha $ (or any finite linear combination of the
Hermite functions) $  \pi(z,w)F(\xi) $ is indeed in $ L^2(\R^n) $ and using
Mehler's formula for the Hermite functions we can prove that
$$  \int_{\R^n} |\pi(z,w)\Phi_\alpha(\xi)|^2 d\xi = (2\pi)^{\frac{n}{2}}
e^{(u\cdot y -v\cdot x)} \Phi_{\alpha,\alpha}(2iy,2iv) $$ where
$ \Phi_{\alpha,\alpha} $ are the special Hermite functions which are expressible
in terms of Laguerre functions. Gutzmer's formula says that a similar result
is true for $  \pi(z,w)F(\xi) $ under some assumptions on $ F.$

In order to state Gutzmer's formula we need to introduce one more notation.
Let $ Sp(n,\R) $ stand for the symplectic group consisting of
$ 2n \times 2n $ real matrices that preserve the symplectic form
$ [(x,u),(y,v)] = (u\cdot y-v\cdot x) $ on $ \R^{2n} $ and have determinant
one. Let $ O(2n,\R) $ be the orthogonal
group and we define $ K = Sp(n,\R)\cap O(2n,\R).$ Then there is a one to one
correspondence between $ K $ and the unitary group $ U(n) .$
Let $ \sigma = a+ib $ be an $ n \times n $ complex matrix with real and
imaginary parts $ a $ and $ b.$ Then $ \sigma $ is unitary if and only if
the matrix $ A = \begin{pmatrix}a & -b \cr b & a \end{pmatrix}$ is
in $ K.$ For these facts we refer to Folland \cite{Fo}. By $ \sigma.(x,u) $ we
denote the action of the correspoding matrix $ A $ on $ (x,u).$ This action
has a natural extension to $ \C^n \times \C^n $ denoted by $ \sigma.(z,w) $
and is given by $ \sigma.(z,w) = (a.z-b.w, a.w+b.z) $ where $ \sigma = a+ib.$

\begin{thm} For a holomorphic function  $ F $  we have the following formula
for any  $ z = x+iy, w= u+iv \in \C^n $:
$$ \int_{\R^n} \int_K |\pi(\sigma.(z,w))F(\xi)|^2 d\sigma d\xi $$
$$ =  e^{(u\cdot y - v \cdot x)} \sum_{k=0}^\infty \frac{k!(n-1)!}{(k+n-1)!}
\varphi_k(2iy,2iv) \|P_kf\|_2^2 $$
where $ f $ stands for  the restriction of $ F $ to $ \R^n.$
\end{thm}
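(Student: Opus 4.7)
The plan is to expand $F$ in the Hermite basis, reduce the $\R^n$-integral to a bilinear expression in special Hermite functions, and then apply Schur orthogonality on the $K\simeq U(n)$ orbit. Writing $F=\sum_\alpha c_\alpha\Phi_\alpha$ with $c_\alpha=\langle f,\Phi_\alpha\rangle$ and interchanging summation with integration,
$$\int_{\R^n}|\pi(\sigma.(z,w))F(\xi)|^2\,d\xi=\sum_{\alpha,\beta}c_\alpha\bar c_\beta\,J_{\alpha\beta}(\sigma.(z,w)),$$
where $J_{\alpha\beta}(z,w):=\int_{\R^n}\pi(z,w)\Phi_\alpha(\xi)\overline{\pi(z,w)\Phi_\beta(\xi)}\,d\xi$. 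The diagonal case $J_{\alpha\alpha}$ is computed in the excerpt via Mehler's formula, and polarisation (or the same Mehler calculation applied to $\Phi_\alpha$ and $\Phi_\beta$) yields the off-diagonal analogue $J_{\alpha\beta}(z,w)=(2\pi)^{n/2}e^{u\cdot y-v\cdot x}\Phi_{\alpha,\beta}(2iy,2iv)$.

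Next I would invoke the metaplectic intertwining $\pi(\sigma.(x,u))=\mu(\sigma)\pi(x,u)\mu(\sigma)^{-1}$, where $\mu$ is the Weil/Shale representation of $K$ by unitary operators on $L^2(\R^n)$. Both sides are entire functions of $(z,w)$ when paired with Hermite vectors, so the identity persists for complex $(z,w)$. Since $\mu(\sigma)$ is unitary on $L^2(\R^n)$ and preserves each Hermite eigenspace $\CH_k=\sspan\{\Phi_\alpha:|\alpha|=k\}$, writing $\mu(\sigma)^{-1}\Phi_\alpha=\sum_{|\gamma|=|\alpha|}M_{\alpha\gamma}(\sigma)\Phi_\gamma$ with unitary matrices $M(\sigma)$ transforms the integrand into
$$J_{\alpha\beta}(\sigma.(z,w))=\sum_{|\gamma|=|\alpha|,\,|\delta|=|\beta|}M_{\alpha\gamma}(\sigma)\overline{M_{\beta\delta}(\sigma)}\,J_{\gamma\delta}(z,w).$$

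The $U(n)$-action on $\CH_k$ is the $k$-th symmetric power of the standard representation and is therefore irreducible of dimension $d_k=(k+n-1)!/(k!(n-1)!)$, so Schur orthogonality gives
$$\int_K M_{\alpha\gamma}(\sigma)\overline{M_{\beta\delta}(\sigma)}\,d\sigma=\frac{1}{d_k}\delta_{\alpha\beta}\delta_{\gamma\delta}\quad(|\alpha|=|\beta|=k).$$
Collecting terms yields
$$\int_K\!\!\int_{\R^n}|\pi(\sigma.(z,w))F|^2\,d\xi\,d\sigma=\sum_{k=0}^\infty\frac{k!(n-1)!}{(k+n-1)!}\|P_kf\|_2^2\sum_{|\gamma|=k}J_{\gamma\gamma}(z,w),$$
and the standard identity $\sum_{|\gamma|=k}\Phi_{\gamma,\gamma}(z,w)=(2\pi)^{-n/2}\varphi_k(z,w)$ (extended by holomorphy from its real form) absorbs the constant $(2\pi)^{n/2}$ in $J_{\gamma\gamma}$ and produces the claimed formula.

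The hard part will be to justify the complex extension of the intertwining identity: because $\pi(z,w)$ is unbounded on $L^2(\R^n)$ for complex $(z,w)$, the identity $\pi(\sigma.(z,w))=\mu(\sigma)\pi(z,w)\mu(\sigma)^{-1}$ cannot be manipulated at the operator level but must be checked on Hermite vectors and then extended by holomorphy of both sides. The interchange of the $\alpha,\beta$-sum with the two integrations likewise requires a dominated convergence argument exploiting known Gaussian bounds on the holomorphic extensions of the $\Phi_\alpha$.
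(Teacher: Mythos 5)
The paper does not prove this theorem: it is quoted from \cite{Th4} (``We refer to \cite{Th4} for a proof of the above formula''), so there is no in-text argument to compare against. Your reconstruction is essentially the proof given in that reference: expand $F$ in Hermite functions, compute the bilinear quantity $J_{\alpha\beta}$ via Mehler's formula to get $(2\pi)^{n/2}e^{u\cdot y-v\cdot x}\Phi_{\alpha,\beta}(2iy,2iv)$, use the metaplectic covariance of $\pi$ under $K\cong U(n)$, and kill the off-diagonal terms by Schur orthogonality on the irreducible action of $K$ on each Hermite eigenspace, finishing with $\sum_{|\gamma|=k}\Phi_{\gamma,\gamma}=(2\pi)^{-n/2}\varphi_k$. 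The outline is correct and the constants close up. Three points deserve the care you partly flag: (i) ``polarisation'' of the stated diagonal identity is not literally available from the single-vector formula, so you should take your parenthetical alternative (run the Mehler computation bilinearly on $\Phi_\alpha,\Phi_\beta$, i.e.\ use $\langle\pi(x,u)\Phi_\alpha,\Phi_\beta\rangle=(2\pi)^{n/2}\Phi_{\alpha,\beta}(x,u)$ and analytically continue); (ii) the metaplectic covariance on $K$ is a representation of the double cover, so the matrix $M(\sigma)$ is defined only up to sign on $U(n)$ --- harmless here because only the products $M_{\alpha\gamma}\overline{M_{\beta\delta}}$ enter, but worth a sentence; (iii) for a general holomorphic $F$ the theorem is to be read as ``if either side is finite they are equal,'' and the sum--integral interchange cannot be handled by dominated convergence alone; the clean route is to prove the identity for finite Hermite sums, note that after the $K$-integration every term $d_k^{-1}\|P_kf\|_2^2\,\varphi_k(2iy,2iv)$ is nonnegative (since $L_k^{n-1}(-s)>0$ for $s>0$), and pass to the limit by monotone convergence/Fatou on partial sums. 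With those repairs the argument is complete and matches the cited source's method.
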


In the above formula $ P_k $ are the spectral projections of the Hermite
operator defined by
$$ P_kf(x) = \sum_{|\alpha|=k} \langle f,\Phi_\alpha\rangle\Phi_\alpha(x) $$ and
$$ \varphi_k(z,w) = L_k^{n-1}(\frac{1}{2}(z^2+w^2))e^{-\frac{1}{4}(z^2+w^2)}
$$ are the holomorphically extended Laguerre functions of type
$ (n-1).$ The above formula means that if either the integral or the sum is
finite then they are equal. Note that the sum is clearly finite when
$ f = e^{-tH}g $ for some $ g \in L^2(\R^n).$ We refer to \cite{Th4} for a
proof of the above formula. The characterisation of $ \CH_t(\C^n) $ as the
image of $ L^2(\R^n) $ under the Hermite semigroup $ e^{-tH} $ can be proved
using Gutzmer's formula, see \cite{Th4}. The only other ingredient needed is the
formula
$$ \frac{k!(n-1)!}{(k+n-1)!}\int_{\R^{2n}} p_{2t}(2y,2v)\varphi_k(2iy,2iv)
dy dv = e^{2(2k+n)t} $$
where $ p_t(y,v) $ stands for the heat kernel associated to the special
Hermite operator, see Section 4.

\subsection{ Toeplitz operators on $ \CH_t(\C^n)$}

Let $ P : L^2(\C^n) \rightarrow \CH_t(\C^n) $ be the orthogonal projection
which is explicitly given by
$$ PF(z) = \int_{\R^{2n}} F(u,v) K_t(z,u+iv) U_t(u,v) du dv .$$
Here $ K_t(z,w) $ is the reproducing kernel of $ \CH_t(\C^n) $ defined by
$$ K_t(z,w) = \sum_{\alpha \in \N^n} e^{-2(2|\alpha|+n)t}\Phi_\alpha(z)
\Phi_\alpha(\bar{w}).$$ Using Mehler's formula we can show that
$$ K_t(z,w) = (\sinh(4t))^{-\frac{n}{2}} e^{-\frac{1}{2}\coth(4t) (z^2+w^2)}
e^{\frac{1}{\sinh(4t)}\langle z,w\rangle},$$
where $\langle z,w\rangle$ is the standard Hermitian inner product on
$\C^n $ and $ z^2 = z_1^2+...+z_n^2 $ etc.
For a  measurable function $g$ on $\mathbb{C}^n$ such that $gK_t(.,w)$ belongs to $L^2(\mathbb{C}^n, d\mu_t)$ for all $w$ (we will refer to this condition as $\ast$), we define the Toeplitz operator $T_g$ on
$\mathcal{H}_t(\mathbb{C}^n)$ by
$$T_gf(z) = \int_{\C^n} g(w)f(w)K_s(z,w) d\mu_s(w).$$

By the condition
($\ast$), it is easy to see that $T_g$ is a densely defined operator on $\mathcal{H}_t(\mathbb{C}^n)$. Another important consequence of ($\ast$) is that $g\ast q_s$ is well defined for $0 < s < \frac{1}{2}\sinh4t$, where
$q_s(x) = (4\pi s)^{-\frac{n}{2}}e^{-\frac{1}{4s}|x|^2}$ is the heat kernel corresponding to the standard Laplacian on $\R^n $. In fact, it is a
$\mathcal{C}^{\infty}$ function on $\mathbb{C}^n$.
By using the semigroup property we get
$$g\ast q_{s+r}= (g\ast q_r)\ast q_s,$$
when $0 < s+r < \frac{1}{2}\sinh4t$.
Now we find some necessary and sufficient conditions on $g$ such that
$T_g$ is a bounded operator. These conditions are given in terms of
$g\ast q_s$ for $0< s < \frac{1}{2}\sinh4t$. In order to do this we transfer
$T_g$ to $L^2(\mathbb{R}^n)$ and find the corresponding Weyl symbol of the
resulting operator.

Following Folland \cite{Fo} we define the Weyl pseudo-differential operator
on $L^2(\mathbb{R}^n)$ with symbol $\sigma\in $
$\mathcal{S}'(\mathbb{R}^{2n})$ by
\begin{equation}\label{38}
\sigma(D,X)f(x) = (2\pi)^{-n}\int_{\mathbb{R}^n} \int_{\mathbb{R}^n}\sigma (\frac{1}{2}(x+y),\xi)e^{-i(x-y).\xi} f(y)
dy d\xi.\end{equation}
We recall that (see \cite{Fo}) $\sigma(D,X)= W(\hat{\sigma})$, where $W$ is
the Weyl transform and $\hat{\sigma}$ is the Fourier transform of a
tempered distribution. We define for $\sigma \in \mathcal{S}'(\mathbb{R}^{2n})$
\begin{equation}
\sigma_t(x,\xi) = \sigma(\cosh(2t)x,- \sinh(2t)\xi).
\end{equation}
Note that $\sigma \rightarrow \sigma_t$ is an isomorphism on
$\mathcal{S}'(\mathbb{R}^{2n})$.

\begin{thm}
Let $T_g$, defined as above, be bounded. Then we have
$\|g\ast q_s\|_{\infty} \leq c(s)\|T_g\|$ for all s $\in (\frac{1}{8}\sinh4t,\frac{1}{2}\sinh4t )$. Conversely, if we assume that
$\|g\ast q_s\|_{\infty} < \infty $ for some
$0 < s < \frac{1}{8}\sinh4t$, then
$T_g$ is bounded. Moreover, we have
$$ \|T_g\| \leq c(s) \|g\ast q_s\|_{\infty}.$$
\end{thm}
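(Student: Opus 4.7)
The plan is to transfer $T_g$ to an operator on $L^2(\R^n)$ via the Hermite semigroup, identify the transferred operator as a Weyl pseudo-differential operator whose (rescaled) symbol is a heat flow $g * q_{s_0}$ for a specific value $s_0$ depending on $t$, and then apply the $L^2$ boundedness theory for Weyl operators together with the semigroup property of the heat flow. Since $e^{-tH}$ is, up to the constant $c_n$, an isometric isomorphism from $L^2(\R^n)$ onto $\CH_t(\C^n)$, the operator $S_g := e^{tH} T_g e^{-tH}$ is densely defined on $L^2(\R^n)$ and is bounded precisely when $T_g$ is. Using the explicit Gaussian formulas for the reproducing kernel $K_t$ and for the Mehler kernel of $e^{-tH}$, I would compute the integral kernel of $S_g$ and, after completion of squares and a Gaussian-Fourier integration, recognise it as $\sigma_t(D,X)$ with $\sigma = g * q_{s_0}$ for $s_0 = \tfrac14 \sinh(4t)$. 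The two critical thresholds $\tfrac12 \sinh(4t)$ (the a priori range in which $g * q_s$ is defined) and $\tfrac18 \sinh(4t) = \tfrac12 s_0$ (the midpoint scale intrinsic to the Weyl calculus) will emerge naturally from this calculation.

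For the sufficient direction, assume $\|g*q_s\|_\infty < \infty$ for some $s$ with $0 < s < \tfrac18 \sinh(4t)$. The semigroup identity $g * q_{s_0} = (g * q_s) * q_{s_0 - s}$, together with differentiation under the integral against the Schwartz kernel $q_{s_0 - s}$, shows that every partial derivative of $g * q_{s_0}$ is bounded, with seminorms controlled by $\|g*q_s\|_\infty$ and by powers of $(s_0 - s)^{-1}$. Consequently $(g*q_{s_0})_t$ lies in the H\"ormander class $S^0_{0,0}(\R^{2n})$, and the Calder\'on-Vaillancourt theorem yields $\|S_g\| \le c(s)\|g*q_s\|_\infty$, which transfers to the desired bound on $T_g$.

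For the necessary direction, assume $T_g$, hence $S_g$, is bounded. I would pair $S_g$ with translated Gaussian coherent states $\pi(w_0)\varphi_0^\lambda$, where $\varphi_0^\lambda$ is a normalised Gaussian of width $\lambda$ in $L^2(\R^n)$. A direct Gaussian computation identifies the matrix coefficient $\langle S_g \pi(w_0)\varphi_0^\lambda, \pi(w_0)\varphi_0^\lambda\rangle$ as the value at $w_0$ of the Anti-Wick/Berezin transform of the symbol $(g*q_{s_0})_t$, which in turn equals $g*q_s$ at $w_0$ for some $s = s_0 + r(\lambda) > s_0$. The elementary bound $|\langle S_g u, u\rangle| \le \|S_g\|\|u\|^2$ then gives $\|g*q_s\|_\infty \le c(s)\|T_g\|$, and as $\lambda$ varies over its admissible range the parameter $s$ sweeps out the full interval $(\tfrac18\sinh(4t), \tfrac12\sinh(4t))$.

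The principal obstacle is the bookkeeping in the first step: one must juggle three Gaussian factors (from $U_t$, from $K_t$, and from the Mehler kernel of $e^{-tH}$) to isolate a single convolution of $g$ with a Gaussian and match it to the Weyl quantisation formula after the rescaling $\sigma \mapsto \sigma_t$. Once this identification is pinned down, the sufficient direction is a textbook application of the Calder\'on-Vaillancourt theorem, and the necessary direction reduces to the standard coherent-state/Berezin-transform estimate.
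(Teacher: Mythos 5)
Your overall architecture matches the paper's: conjugate by $e^{tH}$, identify $e^{tH}T_ge^{-tH}$ as a Weyl operator whose rescaled symbol is a heat flow of $g$, prove sufficiency via Calder\'on--Vaillancourt (Folland's Theorem 2.73) and necessity via Berezin-type pairings. But there are two concrete problems. First, you have the heat scale of the Weyl symbol wrong: the Weyl symbol of $e^{tH}T_ge^{-tH}$ is (the rescaling of) $g*q_{\frac18\sinh 4t}$, not $g*q_{\frac14\sinh 4t}$. The scale $\frac14\sinh 4t$ is the Berezin/anti-Wick scale: the Berezin transform $\langle T_gk_z,k_z\rangle$ equals $g*q_{\frac14\sinh 4t}(z)=\bigl(g*q_{\frac18\sinh 4t}\bigr)*q_{\frac18\sinh 4t}(z)$, i.e.\ the Weyl symbol smoothed by one more Gaussian of scale $\frac18\sinh 4t$. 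This is not cosmetic: with your $s_0=\frac14\sinh 4t$ the Calder\'on--Vaillancourt step would ``prove'' boundedness whenever $g*q_s$ is bounded for some $s<\frac14\sinh 4t$, a strictly stronger statement than the theorem; the threshold $\frac18\sinh 4t$ in the hypothesis is precisely the scale of the Weyl symbol, whose derivatives you must control.

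Second, and more seriously, the necessity argument via pure coherent states of variable width cannot reach the subinterval $(\frac18\sinh 4t,\frac14\sinh 4t)$. Pairing a Weyl operator against a rank-one Gaussian state returns the Weyl symbol convolved with the Wigner function of that state, and for a pure Gaussian that Wigner function has covariance pinned at the minimal uncertainty value; after undoing the rescaling $\sigma\mapsto\sigma_t$, the unique width for which this convolution is isotropic in the original variables contributes exactly an extra $q_{\frac18\sinh 4t}$, yielding $g*q_{\frac14\sinh 4t}$ and nothing smaller. Your assertion that $s=s_0+r(\lambda)>s_0$ ``sweeps out'' the interval down to $\frac18\sinh 4t$ contradicts $s>s_0$ and is false. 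The paper gets below $\frac14\sinh 4t$ by a different device: the trace duality $|\mathrm{tr}(W(f)W(\hat{\sigma_t}))|\le\|T_g\|\,\|W(f)\|_{tr}$ applied to translates of Gaussians $f$ with $\hat f(w)=q_{s\cosh 2t}(u)\,q_{s\sinh 2t}(v)$, whose Weyl transforms are trace class but neither rank one nor positive for small $s$; this produces $\sigma*q_s$ for every $s>0$ and hence $\|g*q_s\|_\infty\le c(s)\|T_g\|$ for all $s>\frac18\sinh 4t$. You would need to replace the coherent-state step by such a mixed-state pairing. Two further points the paper handles that you elide: the existence of a tempered distribution $\sigma$ with $e^{tH}T_ge^{-tH}=W(\hat{\sigma_t})$ and its identification via analytic continuation in $s$ of the identity $g*q_{\frac14\sinh 4t}=\sigma*q_{\frac18\sinh 4t}$; and, in the sufficiency direction, the verification that the bounded operator $e^{-tH}\sigma_t(D,X)e^{tH}$ actually coincides with the a priori only densely defined $T_g$, which the paper settles by uniqueness of the Berezin transform.
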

\begin{proof} First let us assume that $T_g$ is bounded.
For $\frac{1}{4}\sinh4t\leq s < \frac{1}{2}\sinh4t $ the proof is trivial. We look at the Berezin Transform of $T_g$ defined by (see \cite{Fo})
\begin{equation}\label{1.1}
\tilde{T_g}(z)= \langle T_g k_z, k_z\rangle_{\mathcal{H}_t}.
\end{equation}
It is easy to check that $\tilde{T_g}(z)= g \ast q_{\frac{1}{4}\\sinh4t}(z)$. Here $k_z(w)= \frac{K_t(w,z)}{\sqrt{K_t(z,z)}}$ is the
normalized reproducing kernel.
In fact, even if $T_g$ is not bounded the Berezin transform is well defined because of the condition ($\ast$) and it is the same as above.
By applying Cauchy-Schwarz inequality to (\ref{1.1}) we get
\begin{equation}\label{1.2}
|g \ast q_{\frac{1}{4}\sinh4t}(z)| \leq \|T_g\|, ~~~z \in \mathbb{C}^n.
\end{equation}
So, by the semigroup property, when $ 0 < s < \frac{1}{2}\sinh4t$ we get
$g \ast q_{s+\frac{1}{4}\sinh4t}(z)$ = $(g \ast q_{\frac{1}{4}\sinh4t})\ast q_s(z)$ and
\begin{equation}\label{1.3}
\|g \ast q_{s+\frac{1}{4}\sinh4t}\|_{\infty}\leq c(s)\|T_g\|,
\end{equation}
where $c(s)$ is independent of $g$.
For proving the estimate for the other half of the interval in the statement
of the theorem, we make use of the boundedness of the operator
$ e^{tH}T_ge^{-tH} $ on $L^2(\mathbb{R}^n).$ Let $ e^{tH}T_ge^{-tH} =
W(\hat{\sigma_t}) $ for some $ \sigma \in \CS'(\R^{2n}).$

In order to find the
explicit form of $ \sigma_t $ we calculate the Berezin transform of $ T_g $
in terms of $ \sigma.$ By using (\ref{38}) an easy computation shows that
$$ \langle T_gk_z,k_z \rangle_{\CH_t} = \langle e^{-tH}\sigma_t(D,X)e^{tH}k_z,k_z \rangle_{\CH_t},$$
\begin{equation}\label{a}
\langle \sigma_t(D,X)e^{tH}k_z,e^{tH}k_z\rangle_{L^2(\R^n)} = \sigma \ast q_{\frac{1}{8}\sinh 4t}(z).
\end{equation}
By equating (\ref{1.1}) and (\ref{a}) we get
\begin{equation}\label{xy}g \ast q_{\frac{1}{4}\sinh 4t}(z)=
\sigma \ast q_{\frac{1}{8}\sinh 4t}(z), ~~~ z\in\C^n.
\end{equation}
Given that $g$ satisfies ($\ast$) and $\sigma\in \CS'(\R^{2n}),$ for a fixed
$z \in \mathbb{C}^n$ it is easy to check the following two facts :
(i) $ s\longrightarrow g\ast q_s(z)$ extends as a holomorphic function to
the domain
$$D_1 = \{\zeta\in\mathbb{C}:|\zeta-\frac{1}{4}\sinh4t|< \frac{1}{4}\sinh4t\}$$
and (ii) $s\longrightarrow \sigma\ast q_s(z)$ extends as a holomorphic
function to $ D_2=\{\zeta\in \mathbb{C}: \Re\zeta >0\}$.
By using the above two facts we get that $g*q_{\frac{1}{8}\sinh 4t}*q_s\equiv\sigma \ast q_s$ for all $0<s<\frac{3}{8}\sinh4t$. Now taking the limit
$ s \longrightarrow 0 $ we get
$$\sigma_t(x,y)=g*q_{\frac{1}{8}\sinh 4t}(\cosh(2t)x,-\sinh(2t)y).$$

Using the fact that $\CB(L^2(\R^n)) $ is the dual of the space of all
trace class operators, we get the following:
\begin{equation}\label{1.6}
|tr (W(f)W(\hat{\sigma_t}))|\leq \|T_g\| \|W(f)\|_{tr}
\end{equation} for all $f\in L^2(\mathbb{C}^{n})$ such that $W(f)$ is trace class. In particular, (\ref{1.6}) holds for all $f$ in Schwartz class.
It is easy to compute that
\begin{equation}\label{1.7}
tr (W(\bar{z})W(f)W(\bar{z})^{\ast}W(\hat{\sigma_t}))=
\hat{f}\ast\sigma_t(z)
\end{equation}
for all $z$ when $f\in \mathcal{S}(\mathbb{R}^{2n})$ and $\sigma \in \mathcal{S}'(\mathbb{R}^{2n})$.
If we choose $f$ in (\ref{1.7}) such that
$\hat{f}(w)= q_{t_{1}}(u)q_{t_{2}}(v), w=u+iv $ where $t_{1}= s\cosh2t $,
$t_{2}=s\sinh2t$ and $z=(\cosh2t)^{-1}x+i(\sinh2t)^{-1}y $ we get
$$ tr (W(\bar{z})W(f)W(\bar{z})^{\ast}W(\hat{\sigma_t}))=
\sigma\ast q_s(x+iy) $$
for all $ s > 0.$ By (\ref{1.6})
$$|\sigma\ast q_s(x+iy)|\leq c(s)\|T_g\|,$$ where
$\sigma=g\ast q_{\frac{1}{8}\sinh4t}$ and this implies
\begin{equation}
|(g\ast q_{\frac{1}{8}\sinh4t})\ast q_s(x+iy)|\leq c(s)\|T_g\|
\end{equation}
for all $ z\in \mathbb{C}^n.$ Finally, the boundedness of $T_g$ implies
that
$$\|g\ast q_s\|_{\infty}\leq c(s)\|T_g\| $$
whenever $ s>\frac{1}{8}\sinh 4t.$

Conversely, let $\|g\ast q_s\|_{\infty} < \infty$ for some $0 < s < \frac{1}{8}\sinh4t$
then proceeding as in Berger and Coburn \cite{BC}
$$ \|\sigma_t\|_{\ast} \equiv \Sigma_{|\mu| +|\beta|\leq 2n+1}
\|D^{\mu}_{\xi}D^{\beta}_{x}\sigma_t\|_{\infty} < \infty,$$
where $\sigma=g\ast q_{\frac{1}{8}\sinh 4t}$. Now we can appeal to
Theorem 2.73 in \cite{Fo} by which $\sigma_t(D,X)$ is bounded with
$\|\sigma_t(D,X)\|\leq \|\sigma_t\|_{\ast}.$ The Berezin symbol of
$e^{-tH}\sigma_t(D,X)e^{tH}$ (see (\ref{a}) and (\ref{1.1})) is given by
$$ (e^{-tH}\sigma_t(D,X)e^{tH})\tilde{}(z)
= \sigma\ast q_{\frac{1}{8}\sinh 4t}(z)=g\ast q_{\frac{1}{4}\sinh 4t}(z)$$
which implies that
$$\tilde{T_g}\equiv (e^{-tH}\sigma_t(D,X)e^{tH})\tilde{}.$$
Hence by the uniqueness of the Berezin transform
$T_g=e^{-tH}\sigma_t(D,X)e^{tH}.$ Therefore, the boundedness of
$ \sigma_t(D,X) $ implies that $\|T_g\|\leq\|\sigma_t\|_{\ast}$. As  shown
in \cite{BC} we have
$\|\sigma_t\|_{\ast}\leq c(n,s)\|g\ast q_s\|_{\infty}$.
Hence the theorem is proved.
\end{proof}

\begin{rem} The above theorem is the analogue of Theorems 11 and 12 
in \cite{BC}. As
in \cite{BC} we conjecture that $ T_g $ is bounded if and only if $
g*q_{\frac{1}{8}\sinh 4t} $ is bounded. We have a class of symbols supporting
this conjecture, see Section 3.3
\end{rem}

\subsection{Hermite multipliers and Toeplitz operators}

In this subsection we are interested in finding a necessary and sufficient
condition on the symbol $ g $ so that $ e^{tH}T_ge^{-tH} $ is a Hermite
multiplier.
 Using the fact that $ W(\sigma) $ is a function of the
Hermite operator if and only if the symbol $ \sigma $ is a radial
distribution we get the following result.
\begin{thm} Given $ T_g $ on $ \CH_t(\C^n) $ the operator $e^{tH}T_ge^{-tH} $
is a Hermite multiplier if and only if $
g*q_{\frac{1}{8}\sinh(4t)}(\cosh(2t)y,-\sinh(2t)v) $ is a radial function
on $ \R^{2n}.$
\end{thm}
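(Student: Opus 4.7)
The plan is to combine two facts. First, from the proof of the preceding theorem in this section, the transferred operator has the explicit Weyl form
\begin{equation*}
e^{tH}T_g e^{-tH} \;=\; W(\widehat{\sigma_t}), \qquad \sigma_t(x,\xi) = g*q_{\frac{1}{8}\sinh(4t)}\bigl(\cosh(2t)x,\,-\sinh(2t)\xi\bigr).
\end{equation*}
So we already know the Weyl symbol explicitly, and the only remaining issue is to read off when the corresponding Weyl transform is a function of $H$, i.e.\ a Hermite multiplier.

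Second, I would invoke the characterization cited immediately before the theorem: $W(\tau)$ is a function of $H$ if and only if $\tau$ is a radial (tempered) distribution on $\R^{2n}$. The underlying mechanism is that the spectral projections $P_k$ of $H$ are, up to a normalizing constant, exactly the Weyl transforms of the Laguerre functions $\varphi_k(y,v) = L_k^{n-1}(\tfrac{1}{2}(|y|^2+|v|^2))e^{-\frac{1}{4}(|y|^2+|v|^2)}$, and the $\varphi_k$ form a radial orthogonal basis on $\R^{2n}$; any radial $L^2$ symbol expands in this basis and hence yields a function of $H$, while conversely a function of $H$ is a sum of the $P_k$ whose Weyl symbols are radial.

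Applying this characterization with $\tau = \widehat{\sigma_t}$, we see that $e^{tH}T_g e^{-tH}$ is a Hermite multiplier if and only if $\widehat{\sigma_t}$ is radial. Since the Euclidean Fourier transform on $\R^{2n}$ commutes with the action of $O(2n)$, radiality of $\widehat{\sigma_t}$ is equivalent to radiality of $\sigma_t$. Substituting the explicit formula for $\sigma_t$, this is precisely the statement that
\begin{equation*}
(y,v) \;\mapsto\; g*q_{\frac{1}{8}\sinh(4t)}\bigl(\cosh(2t)y,\,-\sinh(2t)v\bigr)
\end{equation*}
is radial on $\R^{2n}$, which is the condition in the theorem.

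The main potential obstacle is the cited characterization of radial Weyl symbols; everything else is a direct substitution using the previous theorem and the $O(2n)$-equivariance of the Fourier transform. If the radial-symbol characterization needed to be proved from scratch here, one would argue via Gutzmer's formula and the identification of $P_k$ as the Weyl transform of (a multiple of) $\varphi_k$, together with density of finite Laguerre expansions in the space of radial Schwartz functions.
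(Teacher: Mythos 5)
Your proposal is correct and follows essentially the same route as the paper: the paper states this theorem without a separate proof, deriving it exactly as you do from the explicit Weyl symbol $\sigma_t(x,\xi)=g*q_{\frac{1}{8}\sinh(4t)}(\cosh(2t)x,-\sinh(2t)\xi)$ obtained in the proof of the preceding theorem, together with the quoted fact that a Weyl transform is a function of $H$ if and only if its symbol is radial (radiality being preserved under the Fourier transform). Your sketch of how that characterization would be proved via $W(\varphi_k)=c_nP_k$ matches the standard argument the paper implicitly relies on.
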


\begin{cor} Let $ g $ be as in the theorem. Then $ T_g $ is bounded on
$\CH_t(\C^n) $ if and only if the sequence
$$ \frac{k!(n-1)!}{(k+n-1)!}\int_{\R^{2n}}
g*q_{\frac{1}{8}\sinh(4t)}(\cosh(2t)y,-\sinh(2t)v)\varphi_k(2y,2v) dy dv $$
is bounded.
\end{cor}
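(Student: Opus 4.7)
The strategy is to combine the preceding theorem with the diagonal action of Hermite multipliers in the Hermite basis. By the theorem above, the hypothesis on $g$ is exactly the condition under which the transferred operator $e^{tH}T_g e^{-tH}$ on $L^2(\R^n)$ is a Hermite multiplier, of the form $m(H)=\sum_{k\geq 0} m(2k+n)P_k$, where $P_k$ is the projection onto the $k$-th Hermite eigenspace as in Theorem~3.1. Since $e^{-tH}:L^2(\R^n)\to \CH_t(\C^n)$ is an isometric isomorphism (as recalled in Section~3.1), $T_g$ is bounded on $\CH_t(\C^n)$ if and only if $e^{tH}T_g e^{-tH}$ is bounded on $L^2(\R^n)$, which for a Hermite multiplier is equivalent to the boundedness of the eigenvalue sequence $\{m(2k+n)\}_{k\in\N}$.

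It remains only to identify the $k$-th eigenvalue explicitly. From the proof of the preceding theorem we already know that $e^{tH}T_g e^{-tH}=W(\widehat{\sigma_t})$ with
$$\sigma_t(y,v)=g*q_{\frac{1}{8}\sinh(4t)}(\cosh(2t)y,-\sinh(2t)v).$$
Under the radiality hypothesis, $W(\widehat{\sigma_t})$ commutes with the $U(n)$-action coming from $K=\Sp(n,\R)\cap O(2n,\R)$ and hence acts as a scalar on each Hermite eigenspace; that scalar is obtained by pairing $\sigma_t$ against the radial Laguerre function supported on that eigenspace. Using the identity $\sum_{|\alpha|=k}\Phi_{\alpha,\alpha}(y,v)=c_n\,\varphi_k(2y,2v)$ together with the $L^2$-normalisation of the special Hermite functions (Chapter~1 of \cite{Th6}), one arrives at
$$m(2k+n)=\frac{k!(n-1)!}{(k+n-1)!}\int_{\R^{2n}}\sigma_t(y,v)\varphi_k(2y,2v)\,dy\,dv,$$
and substituting the explicit form of $\sigma_t$ produces precisely the sequence appearing in the statement.

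The only place where genuine work is required is the Laguerre-function diagonalisation of the Weyl transform of a radial symbol; everything else is bookkeeping. It is worth noting that the normalising factor $\frac{k!(n-1)!}{(k+n-1)!}$ is the same one that showed up in Theorem~2.1 for the Fock space --- this is no coincidence, since both reflect the expansion of radial functions on $\C^n$ in Laguerre building blocks of type $(n-1)$. An alternative route, avoiding a direct appeal to \cite{Th6}, would be to compute the Berezin transform of $T_g$ restricted to the subspace spanned by Hermite functions of a fixed degree $k$ using Gutzmer's formula (Theorem~3.1), matching the result against the Berezin transform identity already established in the proof of the preceding theorem.
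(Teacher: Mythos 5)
Your argument is correct and coincides with the route the paper intends (and in fact omits): the corollary is stated without proof, resting exactly on the identification $\sigma_t(y,v)=g*q_{\frac{1}{8}\sinh 4t}(\cosh(2t)y,-\sinh(2t)v)$ from the proof of Theorem 3.2 together with the fact that the Weyl transform of a radial symbol diagonalises in the Hermite basis with Laguerre coefficients $\frac{k!(n-1)!}{(k+n-1)!}\int \sigma_t(y,v)\varphi_k(2y,2v)\,dy\,dv$ as eigenvalues, after which boundedness of $T_g$ is transferred to boundedness of the multiplier sequence via the isometry $e^{-tH}$. The one point deserving care, which you correctly flag, is tracking the normalisations (the argument $\varphi_k(2y,2v)$ rather than $\varphi_k(y,v)$) through the particular Weyl-transform conventions of \cite{Fo} and \cite{Th6}.
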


\begin{exam}
An example of symbol satisfying the condition given in Theorem 3.3
is provided by $ g(y,v) = e^{\alpha |y|^2+\beta |v|^2} $ under suitable
conditions on $ \alpha $ and $ \beta.$  A simple calculation shows that
$$g*q_{\frac{1}{8}\sinh(4t)}(\cosh(2t)y,-\sinh(2t)v) =
e^{\frac{\alpha \coth(2t)\sinh(4t)}{2-\alpha \sinh(4t)}|y|^2}
e^{\frac{\beta \tanh(2t)\sinh(4t)}{2-\beta \sinh(4t)}|v|^2} $$
and hence $ g*q_{\frac{1}{8}\sinh(4t)}(\cosh(2t)y,-\sinh(2t)v)$ is radial
if and only if
$$ \frac{\alpha \coth(2t)}{2-\alpha \sinh(4t)} =
\frac{\beta \tanh(2t)}{2-\beta \sinh(4t)}.$$ After simplification we get the
condition $ \alpha \coth(2t)-\beta \tanh(2t) = \alpha \beta $ which is
necessary and sufficient for the radiality of the function
$$ g*q_{\frac{1}{8}\sinh(4t)}(\cosh(2t)y,-\sinh(2t)v),~~~
g(y,v) = e^{\alpha |y|^2+\beta |v|^2} .$$

When the above condition is verified, by Corollary 3.3 the operator $ T_g $
is bounded on $ \CH_t(\C^n) $ if and only if the sequence
$$ \frac{k!(n-1)!}{(k+n-1)!}\int_{\R^{2n}}
e^{\frac{\alpha \coth(2t)\sinh(4t)}{2-\alpha \sinh(4t)}(|y|^2+|v|^2)}
\varphi_k(2y,2v) dy dv $$
is bounded. Again, by repeating the method in the Theorem 1.1 this is equivalent to the boundedness of the sequence
$$ \frac{k!(n-1)!}{(k+n-1)!} \int_{\C^n} e^{\lambda|z|^2}
\varphi_k(2z) dz $$
$$= c_n \int_{\C^n} e^{\frac{\lambda}{1+\lambda}|z|^2}
\frac{|z|^{2k}}{k!2^k} e^{-\frac{1}{2}|z|^2} dz = c_n \left(\frac{1+\lambda}{1-\lambda}\right)^k$$
where $ \lambda =   \frac{\alpha \coth(2t)\sinh(4t)}{2-\alpha \sinh(4t)}.$
Thus the condition for the
boundedness of $ T_g $ reduces to $ |\frac{1+\lambda}{1-\lambda}| \leq 1 $ or
$ \Re \lambda \leq 0.$ In terms of $ \alpha $ the condition reads as
$ |\alpha|^2 \sinh(4t)-2\Re \alpha \geq 0.$ So, the necessary and sufficient condition for $T_g$ to be bounded is the boundedness of $g*q_{\frac{1}{8}\sinh (4t)}.$ It is worth comparing this
example with a similar example given in \cite{BC}.
\end{exam}

The condition in Corollary 3.3 on $ g $ is not easy to check. However, using Gutzmer's
formula we can get a sufficient condition in a more convenient form for
certain special class of symbols. Consider
radial functions $ h(y,v) $ on $ \R^{2n} $ for which
\begin{equation}\label{26}
 \int_{\R^{2n}} h(y,v)e^{|y|^2+|v|^2} (|y|^2+|v|^2)^{k/2} < \infty
\end{equation}
for all $ k \in \N.$ Define a function $ g $ by the equation
\begin{equation}\label{27}
g(\xi,v)U_t(\xi,v) = \int_{\R^n} e^{-2y\cdot \xi} h(y,v) dy.
\end{equation}

\begin{thm}\label{28} Suppose $ g $ is given by (\ref{27}) where $ h $ 
satisfies (\ref{26}). Then
we have $ e^{tH}T_ge^{-tH} = m_t(H) $  where
$$ m_t(2k+n) = e^{-2(2k+n)t} \frac{k!(n-1)!}{(k+n-1)!}
\int_{\R^{2n}} h(y,v) \varphi_{k}(2iy,2iv) dy dv .$$ Consequently,
$ T_g $ is bounded on $ \CH_t(\C^n) $ if and only if
$ |m_t(2k+n)| \leq C $  for all $ k \in \N.$
\end{thm}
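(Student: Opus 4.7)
The plan is to compute $\langle T_g F_1, F_2\rangle_{\CH_t}$ directly, using the defining relation (\ref{27}) to turn $g$ into an integral of $h$, recognize the inner $u$-integral as a pairing of $\pi$-twisted translates of the $F_j$, and then apply a polarized version of Gutzmer's formula to diagonalize the whole expression in the Hermite basis. The multiplier $m_t$ is then read off the resulting spectral decomposition, and the boundedness characterization follows from the standard $L^\infty$-criterion for Hermite multipliers on $L^2(\R^n)$.

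Write $F_j=e^{-tH}g_j$ with $g_j\in L^2(\R^n)$ and start from
$$\langle T_g F_1, F_2\rangle_{\CH_t} = \int_{\R^{2n}} g(u,v) F_1(u+iv)\overline{F_2(u+iv)}\, U_t(u,v)\, du\, dv,$$
substituting $g(u,v)U_t(u,v)=\int e^{-2y\cdot u}h(y,v)\,dy$ from (\ref{27}). Complexifying $\pi(x,u)\varphi(\xi)=e^{i(x\cdot\xi+\frac{1}{2}x\cdot u)}\varphi(\xi+u)$ gives $\pi(iy,iv)F(\xi)=e^{-y\cdot\xi-\frac{i}{2}y\cdot v}F(\xi+iv)$, and the imaginary phases cancel in the product $\pi(iy,iv)F_1\cdot\overline{\pi(iy,iv)F_2}$, yielding
$$\int_{\R^n} e^{-2y\cdot u} F_1(u+iv)\overline{F_2(u+iv)}\, du = \langle \pi(iy,iv)F_1,\pi(iy,iv)F_2\rangle_{L^2(\R^n)}.$$
After a Fubini exchange, this produces $\langle T_g F_1, F_2\rangle_{\CH_t}=\int_{\R^{2n}} h(y,v)\langle\pi(iy,iv)F_1,\pi(iy,iv)F_2\rangle\, dy\,dv$.

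Since $h$ is radial and therefore $K$-invariant (as $K\subset O(2n)$), I insert an average over $K$ and change variables $(y,v)\mapsto\sigma\cdot(y,v)$ in the $\R^{2n}$-integral; the integrand of the inner $K$-integral is transformed into $\langle\pi(\sigma\cdot(iy,iv))F_1,\pi(\sigma\cdot(iy,iv))F_2\rangle$. Applying Gutzmer's formula to each of $F_1+i^jF_2$, $j=0,1,2,3$, at $(z,w)=(iy,iv)$ (where the prefactor $e^{u\cdot y-v\cdot x}$ equals $1$) and polarizing gives
$$\int_K \langle\pi(\sigma\cdot(iy,iv))F_1,\pi(\sigma\cdot(iy,iv))F_2\rangle\, d\sigma = \sum_{k=0}^\infty \frac{k!(n-1)!}{(k+n-1)!}\,\varphi_k(2iy,2iv)\,\langle P_k f_1, P_k f_2\rangle,$$
where $f_j=F_j|_{\R^n}=e^{-tH}g_j$, hence $P_kf_j=e^{-(2k+n)t}P_kg_j$. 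The hypothesis (\ref{26}) justifies interchanging the sum with the $(y,v)$-integral, producing
$$\langle T_g F_1, F_2\rangle_{\CH_t} = \sum_{k=0}^\infty m_t(2k+n)\,\langle P_k g_1, P_k g_2\rangle$$
with $m_t$ as in the statement. This identifies $e^{tH}T_g e^{-tH}$ as the Hermite multiplier $m_t(H)$, and boundedness of $T_g$ on $\CH_t(\C^n)$ is then equivalent to $\sup_k|m_t(2k+n)|<\infty$.

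The main technical obstacle is justifying the Fubini-type exchanges---first in passing from the $g(u,v)U_t(u,v)$ triple integral to the expression involving $\langle\pi F_1,\pi F_2\rangle$, and then swapping the $(y,v)$-integral with the Gutzmer series. The growth condition (\ref{26}) is precisely what controls $h(y,v)$ against the $(|y|^2+|v|^2)^{k/2}e^{|y|^2+|v|^2}$-type behaviour of the complexified Laguerre functions $\varphi_k(2iy,2iv)$, while the fact that $F_j\in\CH_t$ controls the initial integrand involving $F_1(u+iv)\overline{F_2(u+iv)}$; together these render all the interchanges routine.
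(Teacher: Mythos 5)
Your proposal is correct and follows essentially the same route as the paper: the identities $g\,U_t=\int e^{-2y\cdot\xi}h(y,v)\,dy$, the recognition of the inner integral as $\langle\pi(iy,iv)F_1,\pi(iy,iv)F_2\rangle$, the $K$-invariance of $h$ to insert the $K$-average, and the polarized Gutzmer formula are exactly the paper's ingredients, merely run from the Toeplitz pairing down to the spectral sum rather than from Gutzmer up. The paper is in fact more cavalier about the Fubini interchanges than you are, so no gap.
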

\begin{proof} As we mentioned we prove this theorem by using Gutzmer's formula.
Indeed, polarising Gutzmer's formula we obtain
$$ \int_K \int_{\R^n} \pi(k.(iy,iv))F_1(\xi)\overline{\pi(k.(iy,iv))F_2(\xi)}
d\xi dk $$
$$ =\sum_{k=0}^\infty  \frac{k!(n-1)!}{(k+n-1)!}e^{-2t(2k+n)}\langle P_kf_1,f_2\rangle
\varphi_k(2iy,2iv) $$
where  $ F_j = e^{-tH}f_j, j = 1,2 $ are from $ \CH_t(\C^n).$ Integrating the
above identity with respect to $ h(y,v)dy dv $ we obtain
$$ \int_{\R^{2n}} \int_K \int_{\R^n} \pi(k.(iy,iv))F_1(\xi)\overline
{\pi(k.(iy,iv))F_2(\xi)} h(y,v) d\xi dk dy dv $$
$$ = \sum_{k=0}^\infty  \frac{k!(n-1)!}{(k+n-1)!}e^{-2t(2k+n)}\langle P_kf_1,f_2\rangle
\int_{\R^{2n}}h(y,v)\varphi_k(2iy,2iv) dy dv. $$
When the function $ h $ is $ K$ invariant,
$$ \int_{\R^{2n}} \int_K \int_{\R^n} \pi(k.(iy,iv))F_1(\xi)\overline
{\pi(k.(iy,iv))F_2(\xi)} h(y,v)d\xi dk dy dv $$
$$ = \int_{\R^{2n}} \int_{\R^n} \pi(iy,iv)F_1(\xi)\overline
{\pi(iy,iv)F_2(\xi)} h(y,v)d\xi dy dv .$$ Recalling the
definition of $ \pi(iy,iv) $ the above integral can be rewritten as
$$   \int_{\R^{2n}} \int_{\R^n}F_1(\xi+iv) \overline{F_2(\xi+iv)}
e^{-2 y\cdot \xi} h(y,v)d\xi dv.$$

Suppose now $ g(\xi,v) $ satisfies the equation
$$ g(\xi,v) U_t(\xi,v) = \int_{\R^n} e^{-2 y\cdot \xi} h(y,v)dy $$
and $ m_t(2k+n) $ is defined by
$$ m_t(2k+n) = e^{-2t(2k+n)} \frac{k!(n-1)!}{(k+n-1)!}\int_{\R^{2n}}h(y,v)\varphi_k(2iy,2iv) dy dv. $$
Then it is clear that we have obtained
$$  \int_{\R^{2n}} F_1(\xi+iv) \overline{F_2(\xi+iv)} g(\xi,v) U_t(\xi,v)
d\xi dv $$
$$ = \sum_{k=0}^\infty m_t(2k+n)\langle P_kf_1,f_2\rangle $$
which simply means
that $ (T_gF_1,F_2)_{\CH_t} = \langle m_t(H)f_1,f_2\rangle_{L^2} $ where
$$ m_t(H)f = \sum_{k=0}^\infty m_t(2k+n) P_kf $$ is the Hermite multiplier.
Thus the boundedness of the Toeplitz operator $ T_g $ on $ \CH_t(\C^n) $ is
equivalent to the boundedness of $ m_t(H) $ on $ L^2(\R^n).$
\end{proof}

\begin{rem}
In the above proof of sufficiency we have not used Theorem 3.3  but the 
condition
stated in that theorem can be verified. Indeed, when $ g $ satisfies the
equation (\ref{27}) a simple calculation shows that
$$ g*q_{\frac{1}{8}\sinh(4t)}(\cosh(2t)x,-\sinh(2t)y)
e^{\tanh(2t)(|x|^2+|y|^2)}$$
$$ = \int_{\R^{2n}} h(\xi,v)e^{\tanh(2t)(|\xi|^2+|v|^2)}e^{-\frac{2}{\cosh(2t)}
(x\cdot \xi+y\cdot v)} d\xi dv $$
from which it is clear that
$ g*q_{\frac{1}{8}\sinh(4t)}(\cosh(2t)x,-\sinh(2t)y)$ is radial whenever
$ h(\xi,v) $ is radial. The above equation also suggests a relation between
$ g $ and $ h.$
\end{rem}

\begin{rem} The radiality of the function
$$ g*q_{\frac{1}{8}\sinh(4t)}(\cosh(2t)x,-\sinh(2t)y) $$ is not equivalent
to the factorisation given in (\ref{27}).
 Indeed, consider the symbol $ g(x,y) =
e^{\alpha |x|^2 +\beta |y|^2} $ considered earlier with the conditions
$ \alpha \coth(2t)-\beta \tanh(2t) = \alpha \beta $ and $ \Re(\alpha) <
\frac{1}{2}(\sinh(2t))^{-1}.$ If there exists a function $ h $ such that
$$ g(\xi,v)U_t(\xi,v) = \int_{\R^n} e^{-2\xi \cdot y}h(y,v) dy, $$ then
we have the relation
$$ g(\frac{i}{2}\xi,v)U_t(\frac{i}{2}\xi,v) = \int_{\R^n} e^{-i\xi \cdot y}
h(y,v) dy .$$
This leads to the equation
$$ e^{-\frac{1}{4}(\tanh(2t)+\alpha)|y|^2} e^{-(\coth(2t)-\beta)|v|^2} =
\int_{\R^n} e^{-i\xi \cdot y} h(y,v) dy .$$
By Fourier inversion we see that
$$ h(y,v) = c~ e^{-\frac{1}{\tanh(2t)+\alpha}|y|^2} e^{-(\coth(2t)-\beta)|v|^2}
$$ which is not radial in general.
\end{rem}

\begin{rem}
Since $$ U_t(\xi,v) = c_t \int_{\R^n} p_{2t}(2y,2v)e^{-2y \cdot \xi} dy $$ the
equation (\ref{27}) is equivalent to
$$ g(\xi,v) = \int_{\R^n} g_1(y,v)e^{-2y \cdot \xi} dy .$$ Indeed, if $ g $
satisfies the above equation, then the function
$$ h(y,v) = \int_{\R^n} g_1(y-u,v)p_{2t}(2u,2v) du $$ satisfies
$$ \int_{\R^n} h(y,v) e^{-2y\cdot \xi} dy = g(\xi,v) U_t(\xi,v) $$ as can be easily verified. Thus for such symbols  Theorem \ref{28} is valid.
\end{rem}

\section{Toeplitz operators on Twisted Bergman spaces}
\setcounter{equation}{0}

In this section we take up the study of Toeplitz operators on twisted Bergman
spaces which are Segal-Bargmann spaces associated to the special Hermite
semigroup $ e^{-tL}.$ These spaces arise naturally in the study of
Segal-Bargmann transform on the Heisenberg group, see \cite{KTX1}. We show
that $ e^{tL}T_g e^{-tL} $ is a Weyl multiplier if and only if  the symbol
$ g(x+iy,u+iv) $ depends only on $ (y,v).$  By means of Gutzmer's formula
we study boundedness of $ T_g $ which correspond to multipliers for special
Hermite operators.

\subsection{Twisted Bergman spaces}
By the term twisted Bergman spaces we mean the Hilbert space of entire
functions $ F(z,w) $ on $ \C^{2n} $ which are square integrable with respect
to the weight function
$$ W_t(z,w) = e^{(u\cdot y-v\cdot x)}p_{2t}(2y,2v) $$
where
$$ p_{t}(y,v) = c_n(\sinh(2t))^{-n} e^{-\frac{1}{4}\coth(2t)(|y|^2+|v|^2)} $$
is the heat kernel associated to the special Hermite operator $ L ,$ see \cite{Th3}.
Thus the special Hermite semigroup $ e^{-tL} $ is given by $  e^{-tL}f =
f \times p_t ,$ the twisted convolution of $ f $ with $ p_t.$ These spaces,
denoted by $ \CB_t^*(\C^{2n}) ,$ arise naturally in the study
of Segal-Bargmann transform on the Heisenberg group \cite{KTX1}. The following result
proved in \cite{KTX1} characterises $ \CB_t^*(\C^{2n}) $.

\begin{thm} An entire function $ F $ on $ \C^{2n} $ belongs to
$ \CB_t^*(\C^{2n})$ if and only if its restriction to $ \R^{2n} $ is of the
form $ e^{-tL}f(x,u) $ for some $ f \in L^2(\R^{2n}).$ Moreover, the norm of
$ F $ in $ \CB_t^*(\C^{2n})$ is the same as the norm of $ f $ in
$ L^2(\R^{2n}).$
\end{thm}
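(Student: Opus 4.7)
The plan is to reduce the theorem to a weighted orthogonality identity for the special Hermite functions $\Phi_{\alpha,\beta}$ against the weight $W_t$, which is the twisted analogue of the Laguerre identity
$$\frac{k!(n-1)!}{(k+n-1)!}\int_{\R^{2n}} p_{2t}(2y,2v)\varphi_k(2iy,2iv)\,dy\,dv = e^{2(2k+n)t}$$
recorded in Section 3. Concretely, I would aim for
$$\int_{\C^{2n}} \Phi_{\alpha,\beta}(z,w)\overline{\Phi_{\mu,\nu}(z,w)}\,W_t(z,w)\,dz\,dw = c_n\,\delta_{\alpha\mu}\delta_{\beta\nu}\,e^{2(2|\alpha|+n)t}.$$
Once this is in hand, both directions of the theorem follow by orthogonal expansion.

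For the forward inclusion, take $f \in L^2(\R^{2n})$ with special Hermite expansion $f = \sum c_{\alpha,\beta}\Phi_{\alpha,\beta}$, so that $e^{-tL}f = \sum c_{\alpha,\beta}\,e^{-(2|\alpha|+n)t}\Phi_{\alpha,\beta}$. Since each $\Phi_{\alpha,\beta}$ is entire on $\C^{2n}$ and the exponential damping dominates the polynomial growth of its holomorphic extension, the series converges locally uniformly to an entire $F$. The weighted orthogonality identity then collapses the computation of $\|F\|_{\CB_t^*}^2$ term by term, giving $\|F\|_{\CB_t^*}^2 = \|f\|_{L^2(\R^{2n})}^2$ up to the normalisation absorbed in $c_n$.

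For the converse, given $F \in \CB_t^*(\C^{2n})$, I would expand $F$ in the special Hermite basis with coefficients that can be read off using the identity above applied to partial sums. This yields square-summable coefficients $c_{\alpha,\beta}$, and setting $f = \sum c_{\alpha,\beta}\Phi_{\alpha,\beta} \in L^2(\R^{2n})$ one has $e^{-tL}f = F$ on $\R^{2n}$; since both sides are entire, they must coincide on all of $\C^{2n}$. A preliminary density step is required, namely that finite linear combinations of $\Phi_{\alpha,\beta}$ are dense in $\CB_t^*$, which follows from an explicit reproducing kernel of the form $\sum e^{-2(2|\alpha|+n)t}\Phi_{\alpha,\beta}(z,w)\overline{\Phi_{\alpha,\beta}(z',w')}$ together with the orthogonality identity.

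The main obstacle is the weighted orthogonality identity itself. The factor $e^{u\cdot y-v\cdot x}$ appearing in $W_t$ is precisely the symplectic pairing that emerges when the twisted convolution kernel is extended holomorphically, so one expects the $(x,u)$-integration to reduce to the standard $\R^{2n}$-orthogonality of the $\Phi_{\alpha,\beta}$ while the $(y,v)$-integration reduces to the Laguerre formula displayed above. Making this decoupling rigorous is the computational heart of the proof; the cleanest route is to invoke a Gutzmer-type formula for special Hermite expansions, parallel to Theorem 3.1, applied with $F=\Phi_{\alpha,\beta}$ and then integrated against $h(y,v)=p_{2t}(2y,2v)$, so that the Section 3 identity produces the target equality in one stroke and simultaneously justifies every interchange of sum and integral.
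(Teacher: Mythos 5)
Your proposal follows essentially the same route as the paper: the paper derives Theorem 4.1 from the Gutzmer formula for special Hermite expansions (Theorem 4.2) together with the identity $\int_{\R^{2n}} p_{2t}(2y,2v)\varphi_k(2iy,2iv)\,dy\,dv = \frac{(k+n-1)!}{k!(n-1)!}e^{2(2k+n)t}$, which is exactly the ``cleanest route'' you settle on, merely repackaged through the weighted orthogonality of the basis $\Phi_{\alpha,\beta}$ (a computation the paper itself carries out later, in the proof of Theorem 4.4). The only quibble is a convention slip: with the paper's normalisation $e^{-tL}\Phi_{\alpha,\beta}=e^{-(2|\beta|+n)t}\Phi_{\alpha,\beta}$, the exponent in your orthogonality identity should involve $|\beta|$ rather than $|\alpha|$.
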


Another proof of this was found in \cite{Th2} which is based on the following
Gutzmer's formula for the special Hermite expansion. Recall that $ \varphi_k(x,u) = \varphi_k(x+iu) $ are the
Laguerre functions of type $ (n-1) $ introduced earlier. They extend to
entire functions on $ \C^{2n} $ and are denoted by $ \varphi_k(z,w).$ The
twisted convolutions $ f \times \varphi_k $ are the projections onto the
$ k$-th eigenspace of $ L $ and the special Hermite expansion of an $ f
\in L^2(\R^{2n}) $ is written as
$$ f = (2\pi)^{-n} \sum_{k=0}^\infty f\times \varphi_k $$
where the series converges in $ L^2.$ The heat kernel $p_t$ associated to the special Hermite operator can also be written as
$$p_t(x,u)= (2\pi)^{-n}\sum_{k=0}^{\infty}e^{-(2k+n)t}\varphi_k(x,u).$$

\begin{thm}For any $F\in\mathcal{O}(\mathbb{C}^{2n})$ we have
$$\int_{\mathbb{R}^{2n}}\int_{K}e^{(u.y-v.x)}|F(\sigma(x+iy,u+iv))|^2 dxdu
d\sigma $$
$$=\sum_{k=0}^{\infty}\frac{k!(n-1)!}{(k+n-1)!}\|f\times\varphi_k\|_2^2
~\varphi_k(2iy,2iv),$$ where $f$ is the restriction of $F$ to $\R^{2n}.$
\end{thm}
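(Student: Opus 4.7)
The plan is to expand the restriction $f = F|_{\R^{2n}}$ in its special Hermite series $f = (2\pi)^{-n}\sum_k f\times\varphi_k$ and exploit the $K$-invariance of the eigenspaces of $L$, mirroring the strategy used for Theorem 3.1. The key structural input is that $K = \Sp(n,\R)\cap O(2n,\R)$ preserves both the symplectic form and the Euclidean inner product on $\R^{2n}$, hence commutes with the special Hermite operator $L$, so every eigenspace of $L$ is $K$-invariant. By monotone convergence I may reduce to the case where $f$ has only finitely many nonzero components $f\times\varphi_k$, since, after polarising, each diagonal contribution is manifestly nonnegative.

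Polarising, I would study the sesquilinear form
\begin{equation*}
B(F_1, F_2)(y,v) := \int_{\R^{2n}}\int_K e^{u\cdot y - v\cdot x}\, F_1(\sigma(z,w))\,\overline{F_2(\sigma(z,w))}\,d\sigma\,dx\,du,
\end{equation*}
where $(z,w) = (x+iy,\,u+iv)$. The argument then splits into two claims: (i) \emph{cross terms vanish}, i.e.\ $B(F_j, F_k) = 0$ whenever $F_j, F_k$ are holomorphic extensions of elements from distinct $L$-eigenspaces; and (ii) \emph{diagonal terms match}, i.e.\ $B(F_k, F_k)(y,v) = \frac{k!(n-1)!}{(k+n-1)!}\,\|f\times\varphi_k\|_2^2\,\varphi_k(2iy,2iv)$ for $F_k$ the holomorphic extension of $(2\pi)^{-n}f\times\varphi_k$. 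Summing over $k$ then yields the stated identity.

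The main obstacle is (i), the vanishing of cross terms. One must show that after holomorphic continuation, and with the twist $e^{u\cdot y - v\cdot x}$ present, the $K$-averaged sesquilinear form still respects the spectral decomposition of $L$. The idea is that $B$ is $K$-invariant by construction, so by a Schur-type argument on each $L$-eigenspace (on which $K$ acts in a controlled way), $B$ must be diagonal with respect to the $L$-decomposition; however, making this rigorous after holomorphic extension requires combining the algebraic $K$-invariance with growth estimates in order to pass from $\R^{2n}$-data to $\C^{2n}$-data. The diagonal computation in (ii) then reduces, via the reproducing identity for $\varphi_k$ under twisted convolution together with the explicit Laguerre form $\varphi_k(z,w) = L_k^{n-1}(\tfrac{1}{2}(z^2+w^2))e^{-\frac{1}{4}(z^2+w^2)}$, to a Laguerre integral producing the factor $\varphi_k(2iy,2iv)$ with the stated normalisation constant $\frac{k!(n-1)!}{(k+n-1)!}$.
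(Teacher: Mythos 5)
The paper does not actually prove this statement: Theorem 4.2 is quoted from \cite{Th2}, so there is no internal proof to compare against. Judged on its own terms, your proposal has a genuine gap at precisely the point you flag as ``the main obstacle'', and the mechanism you propose for closing it does not work. The form $B$ is invariant under the diagonal action of $K\cong U(n)$, and a $K$-invariant sesquilinear form is only guaranteed to vanish between isotypic components of \emph{inequivalent} $K$-types. But the eigenspaces of $L$ are infinite-dimensional and highly reducible under $K$, and distinct eigenspaces share $K$-types; for instance the trivial type occurs in every eigenspace, being spanned there by the radial function $\varphi_k$ itself. Hence $K$-invariance alone permits a nonzero pairing between $\varphi_j$ and $\varphi_k$ for $j\neq k$, so claim (i) does not follow; and within a single eigenspace a $K$-invariant form may assign different scalars to different $K$-types, so claim (ii) --- that $B(F_k,F_k)$ equals $\frac{k!(n-1)!}{(k+n-1)!}\|f\times\varphi_k\|_2^2\,\varphi_k(2iy,2iv)$ with one universal constant --- does not follow either. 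In effect the proposal restates the theorem in decomposed form without proving the decisive orthogonality. The reduction to finitely many components by monotone convergence is also premature, since positivity of the diagonal terms is of no use until the cross terms are known to vanish.

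What is actually needed --- and what this paper itself invokes later, in the proof of Theorem 4.4 --- is the explicit relation
$$\int_{\R^{2n}}\int_K e^{(u\cdot y-v\cdot x)}\,\Phi_{\alpha,\beta}(\sigma(z,w))\overline{\Phi_{\mu,\nu}(\sigma(z,w))}\,dx\,du\,d\sigma=\delta_{\alpha,\mu}\delta_{\beta,\nu}\,\frac{k!(n-1)!}{(k+n-1)!}\,\varphi_k(2iy,2iv),\quad |\beta|=k,$$
from which the theorem follows by expanding $F$ in the $\Phi_{\alpha,\beta}$ and summing. This identity is proved in \cite{Th2} by realising $\Phi_{\alpha,\beta}(z,w)=(2\pi)^{-n/2}\langle\pi(z,w)\Phi_\alpha,\Phi_\beta\rangle$ as a holomorphically extended matrix coefficient of the Schr\"odinger representation, applying the Weyl-transform orthogonality relations to the inner integral over $\R^{2n}$, and then using the metaplectic action of $K$ on the Hermite basis to produce the Kronecker deltas and the Laguerre factor. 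If you replace the Schur heuristic by this computation, the rest of your outline goes through.
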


Clearly when $F= e^{-tL}f$ the above formula holds.
So, Theorem 4.1 easily follows from Gutzmer's formula once we have the identity
\begin{equation}\label{37} \int_{\R^{2n}} p_{2t}(2y,2v)\varphi_k(2iy,2iv) dydv = \frac{(k+n-1)!}{k!
(n-1)!}e^{(2k+n)2t}.
\end{equation}
This has been proved in \cite{Th2}, see Lemma $6.3$.  The following extension of this
result is needed for the study of Toeplitz operators.

\begin{lem}
\begin{equation} \label{4.1}\int_{\R^{2n}} e^{\frac{i(u\cdot y-v\cdot x)}{2}} p_{t}(x-y,u-v)
\varphi_k(iy,iv) dydv = \varphi_k(ix,iu) e^{(2k+n)t}.\end{equation}
\end{lem}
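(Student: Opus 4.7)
The identity (\ref{4.1}) generalizes the moment formula (\ref{37}), which is the case $x = u = 0$, to arbitrary $(x, u) \in \R^{2n}$. Structurally, the left-hand side is a twisted convolution of $p_t$ with the function $\varphi_k^\star(y, v) := \varphi_k(iy, iv)$, and since twisted convolution with $p_t$ implements the special Hermite semigroup $e^{-tL}$, the identity amounts to the spectral statement $e^{-tL}\varphi_k^\star = e^{(2k+n)t}\varphi_k^\star$ --- that is, $\varphi_k^\star$ should be an eigenfunction of $L$ with eigenvalue $-(2k+n)$. My plan is to first establish this eigenvalue property and then verify (\ref{4.1}) rigorously by a direct Gaussian integration.

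For the spectral claim, write the special Hermite operator as $L = -\Delta_{(y,v)} + \tfrac14(|y|^2+|v|^2) - iN$ with $N = \sum_j(y_j\partial_{v_j} - v_j\partial_{y_j})$. Because $\varphi_k$ depends only on $|y|^2+|v|^2$, we have $N\varphi_k \equiv 0$, and by analytic continuation $N\varphi_k^\star \equiv 0$. The chain rule gives $\Delta_{(y,v)}[\varphi_k(iy,iv)] = -(\Delta\varphi_k)(iy,iv)$; evaluating $L\varphi_k = (2k+n)\varphi_k$ at $(iy, iv)$ and using $|iy|^2+|iv|^2 = -(|y|^2+|v|^2)$ then yields $L\varphi_k^\star = -(2k+n)\varphi_k^\star$. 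To convert this into a proof of (\ref{4.1}), I would reduce the claim to an explicit Gaussian integral: either via Mehler's expansion $p_t(y,v) = (2\pi)^{-n}\sum_j e^{-(2j+n)t}\varphi_j(y,v)$, or, more compactly, via the generating function $\sum_k r^k \varphi_k(y,v) = (1-r)^{-n}\exp\!\bigl[-\tfrac{1+r}{4(1-r)}(|y|^2+|v|^2)\bigr]$. In the latter approach, multiplying both sides of (\ref{4.1}) by $r^k$ and summing reduces the claim to integrating a Gaussian in $(y, v)$ with complex quadratic form together with the linear phase $e^{i(u\cdot y - v\cdot x)/2}$; completing the square and using the hyperbolic identities $\coth(2t) \pm 1 = e^{\pm 2t}/\sinh(2t)$ collapses the result to the generating function of the right-hand side evaluated at parameter $re^{2t}$, after which comparing coefficients of $r^k$ gives (\ref{4.1}).

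The main obstacle is that $\varphi_k^\star$ is not in $L^2(\R^{2n})$ --- it grows like $e^{(|y|^2+|v|^2)/4}$ --- so the formal semigroup identity requires justification. The saving grace is that $p_t$ decays like $e^{-\coth(2t)(|y|^2+|v|^2)/4}$, and since $\coth(2t) > 1$ for $t > 0$, the net integrand decays like $e^{-(\coth(2t)-1)(|y|^2+|v|^2)/4}$. This gives absolute convergence of all the integrals and of the termwise summation of the Mehler expansion, legitimizing the Gaussian calculation and yielding (\ref{4.1}).
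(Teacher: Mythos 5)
Your proposal is correct, but it takes a genuinely different route from the paper. The paper's proof hinges on the fact that the $\varphi_k$ are eigenfunctions of the symplectic Fourier transform with eigenvalue $(-1)^k$: continuing that relation analytically to imaginary arguments, substituting it into the left-hand side of (\ref{4.1}) and performing the inner Gaussian integral yields a function $F(t)$ holomorphic in the strip $|\Im t|<\pi/2$; the value $F(-t)$ is recognised as a genuine twisted convolution $\varphi_k\times\widetilde{p_t}$ at an imaginary point, computed from $\widetilde{p_t}=\sum_j(-1)^je^{-(2j+n)t}\varphi_j$ and the orthogonality $\varphi_k\times\varphi_j=c_n\delta_{jk}\varphi_k$, and the lemma follows by continuing back from $-t$ to $t$. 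You instead read the left-hand side as $e^{-tL}$ applied to the non-$L^2$ eigenfunction $\varphi_k(i\cdot,i\cdot)$ of $L$ with eigenvalue $-(2k+n)$, and verify the resulting semigroup identity by summing against $r^k$ and evaluating a single Gaussian integral via the Laguerre generating function. Your route is more self-contained (no symplectic Fourier transform, no holomorphic continuation in $t$) and makes the convergence mechanism --- $\coth(2t)>1$ beating the growth $e^{\frac14(|y|^2+|v|^2)}$ --- completely transparent; the paper's route stays within honest $L^2$ twisted-convolution identities at the price of two analytic continuations.

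Two repairs are needed in your write-up. First, the claim that the \emph{termwise} summation of the Mehler expansion converges absolutely is false: an individual term $\varphi_j(x-y,u-v)\,\varphi_k(iy,iv)$ is not integrable, because $\varphi_j$ decays only like $e^{-\frac14(|y|^2+|v|^2)}$, which exactly cancels the growth of $\varphi_k(iy,iv)$ and leaves a non-decaying factor $e^{\frac12(x\cdot y+u\cdot v)}$ times a polynomial; only the aggregate kernel $p_t$ carries the surplus decay. So the Mehler route should be dropped and only the generating-function route retained. Second, in that route the interchange of $\sum_k r^k$ with the integral (legitimate by Tonelli, all factors being positive) requires $0<r$ small enough that $\tfrac{1+r}{1-r}<\coth(2t)$; this is harmless, since matching power-series coefficients on any interval of $r$ determines them all, but it should be stated. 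With these adjustments your argument is a complete and valid alternative proof of (\ref{4.1}), reducing to (\ref{37}) at $x=u=0$ as a check.
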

\begin{proof} Recall that the symplectic Fourier transform $ \tilde{f} $ of a
function $ f $ is defined by $ \tilde{f}(x,y) = \hat{f}(\frac{1}{2}(-y,x)).$
We know that $\varphi_k$'s are eigenfunctions of the symplectic Fourier
transform with  eigenvalues $(-1)^k$, i.e.
$$ (2\pi)^{-n} \int_{\mathbb{R}^{2n}}\varphi_k(\xi,\eta)
e^{\frac{i(\eta.y-\xi.v)}{2}}d\xi d\eta= (-1)^k\varphi_k(y,v).$$
The above equation remains true even if we replace $ (y,v) $ by $ (iy,iv) $.
So we get
\begin{equation}\label{4.2}
(2\pi)^{-n}\int_{\mathbb{R}^{2n}}\varphi_k(\xi,\eta)e^{-\frac{(\eta.y-\xi.v)}{2}}d\xi d\eta= (-1)^k\varphi_k(iy,iv).
\end{equation}
Now putting (\ref{4.2}) in (\ref{4.1}) and by using Fubini's theorem we get
$$(-1)^k\int_{\mathbb{R}^{2n}}\int_{\mathbb{R}^{2n}}e^{\frac{i(u\cdot y-v\cdot x)}{2}} e^{-\frac{\coth t }{4}(x-y)^2+(u-v)^2}e^{-\frac{(\eta.y-\xi.v)}{2}}
\varphi_k(\xi,\eta) dy dvd\xi d\eta$$
$$=(-1)^k\int_{\mathbb{R}^{2n}}e^{\frac{\eta.x-\xi.u}{2}}e^{-\frac{\tanh t}{4}(u-i\eta)^2+(x-i\xi)^2}\varphi_k(\xi,\eta)d\xi d\eta.$$
Now look at the function
$$F(t)=\int_{\mathbb{R}^{2n}}e^{\frac{\eta.x-\xi.u}{2}}e^{-\frac{\tanh t}{4}(u-i\eta)^2+(x-i\xi)^2}\varphi_k(\xi,\eta)d\xi d\eta.$$
If we replace $t$ by $z $ with $|\Im{z}|<\pi/2$, it is easy to see that the integral converges absolutely. In fact, $F$ can be extended as a holomorphic function to the strip $|\Im{z}|<\pi/2 $ containing the real line.
Consider
$$F(-t)=\int_{\mathbb{R}^{2n}}e^{\frac{\eta.x-\xi.u}{2}}e^{\frac{\tanh t}{4}(u-i\eta)^2+(x-i\xi)^2}\varphi_k(\xi,\eta)d\xi d\eta $$
which after using
$$ e^{\frac{\tanh t}{4}(u-i\eta)^2+(x-i\xi)^2}=
e^{-\frac{\tanh t}{4}(\eta+iu)^2+(\xi+ix)^2}$$
reads as $$F(-t)=\int_{\mathbb{R}^{2n}}e^{\frac{\eta.x-\xi.u}{2}}e^{-\frac{\tanh t}{4}(\eta+iu)^2+(\xi+ix)^2}\varphi_k(\xi,\eta)d\xi d\eta .$$
This is nothing but the twisted convolution of $\varphi_k$ with
$\widetilde{p_t}$ at $(iu,ix).$ It is easy to calculate $F(-t)$ by recalling
$$\widetilde{p_t}(\xi,\eta)=\sum_{k=0}^{\infty}e^{-(2k+n)t}(-1)^k
\varphi_k(\xi,\eta).$$
Using the above along with the fact that $\varphi_k\times\varphi_j=
c_n\delta_{j,k}\varphi_k $ we get $$F(-t)=(-1)^ke^{-(2k+n)t}
\varphi_k(ix,iu).$$ The right hand side of the above equation is also a
holomorphic function of $t$ and both sides agree on the negative real axis.
Therefore, they agree everywhere and changing $ t $ into $ -t $ we get the
lemma.
\end{proof}

Note that when $ x = u =0 $ this lemma reduces to (\ref{37}) as
$ \varphi_k(0,0) =  \frac{(k+n-1)!}{k!(n-1)!}.$

\subsection{ Toeplitz operators and special Hermite multipliers}

In this subsection we get some necessary and sufficient conditions for the boundedness
of $T_g$ on $\CB_t^*(\C^{2n})$ for a special class of symbols, by making use of Gutzmer's
Formula for the special Hermite expansion.
First note that by Theorem 4.1 $e^{-tL}\Phi_{\alpha,\beta}(z,w)= 
e^{-(2|\beta|+n)t}\Phi_{\alpha,\beta}(z,w)$
form an orthonormal basis for $\CB_t^*(\C^{2n}).$ We denote $e^{-tL}\Phi_{\alpha,\beta}$ by $\phi_{\alpha,\beta}$
in this section.
Consider a measurable function $g$ on $\mathbb{C}^{2n}$ for which
\begin{equation}\label{99}\int_{\mathbb{C}^{2n}}|g(z,w)\phi_{\alpha,\beta}(z,w)\phi_{\mu,\nu}(z,w)|W_t(z,w)dz dw <\infty.
\end{equation}
Now we can define a densely defined bilinear form on $\CB_t^*(\C^{2n})$ by
$$\langle T_g \phi_{\alpha,\beta},\phi_{\mu,\nu}\rangle_{\CB_t^*(\C^{2n})} := \int_{\mathbb{C}^{2n}}g(z,w)\phi_{\alpha,\beta}(z,w)\overline{\phi_{\mu,\nu}}(z,w)W_t(z,w)dz dw.$$
These are nothing but the matrix entries of $T_g$.
We consider special symbols $g$ for which the above densely defined bilinear form becomes a diagonal form. Such symbols are provided by functions of the 
form  $ g(x+iy,u+iv)= g_0(y,v)$ where $g_0 $ is a radial function on 
$\mathbb{R}^{2n}.$
\begin{thm}
Let $g $ be as above and satisfy (4.2.4). Then $T_g $ is bounded if and only 
if the sequence
$$ e^{-(2k+n)2t} \frac{k!(n-1)!}{(k+n-1)!}\int_{\mathbb{R}^{2n}}g_0(y,v)
p_{2t}(2y,2v)\varphi_k(2iy,2iv) dy dv  $$ is  bounded.
\end{thm}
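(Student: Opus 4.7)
The plan is to adapt the strategy from Theorem~\ref{28}: polarize Gutzmer's formula for the special Hermite expansion (Theorem 4.2) and integrate it against the $K$-invariant weight $g_0(y,v)\,p_{2t}(2y,2v)\,dy\,dv$ so as to diagonalize $T_g$ in the orthonormal basis $\{\phi_{\alpha,\beta}\}$.

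First I would polarize Theorem 4.2 to obtain, for any $F_j = e^{-tL}f_j$ with $f_j \in L^2(\R^{2n})$,
$$ \int_K \int_{\R^{2n}} e^{u\cdot y - v\cdot x} F_1(\sigma.(x+iy,u+iv)) \overline{F_2(\sigma.(x+iy,u+iv))}\, dx\, du\, d\sigma = \sum_{k=0}^\infty e^{-2(2k+n)t}\frac{k!(n-1)!}{(k+n-1)!}\langle f_1 \times \varphi_k, f_2 \times \varphi_k\rangle \varphi_k(2iy,2iv), $$
where the factor $e^{-2(2k+n)t}$ comes from $(f_j \times p_t) \times \varphi_k = e^{-(2k+n)t} f_j \times \varphi_k$ applied to the boundary value $F_j|_{\R^{2n}} = f_j \times p_t$.

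Next I would exploit that $g_0$, being radial on $\R^{2n}$, is invariant under the full orthogonal group and in particular under $K \subset Sp(n,\R)\cap O(2n,\R)$, and that the Gaussian $p_{2t}(2y,2v)$ is similarly radial. The symplectic phase $e^{u\cdot y - v\cdot x}$ is preserved by $K$, so the measure $W_t(z,w)\,dz\,dw$ is $K$-invariant. I can therefore replace the integrand in $\langle T_g F_1,F_2\rangle_{\CB_t^*(\C^{2n})}$ by its $K$-average under $\sigma.(x+iy,u+iv)$; by Fubini the inner $(x,u,\sigma)$-integral is exactly the left-hand side of the polarized Gutzmer identity. Integrating the identity against $g_0(y,v)\,p_{2t}(2y,2v)\,dy\,dv$ yields
$$ \langle T_g F_1, F_2\rangle_{\CB_t^*(\C^{2n})} = \sum_{k=0}^\infty m_t(k)\,\langle f_1 \times \varphi_k, f_2 \times \varphi_k\rangle, $$
with $m_t(k)$ precisely the sequence appearing in the statement. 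Since $f\times\varphi_k$ is a constant multiple of the orthogonal projection $P_k f$ onto the $k$-th eigenspace of $L$, this identity says that under the isometry $e^{-tL}\colon L^2(\R^{2n}) \to \CB_t^*(\C^{2n})$ the Toeplitz operator $T_g$ corresponds to the special Hermite multiplier $\sum_k m_t(k) P_k$, whose $L^2$-operator norm equals $\sup_k |m_t(k)|$. Both directions of the theorem follow.

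The main obstacle will be justifying the interchange of summation and $(y,v)$-integration against $g_0\,p_{2t}$ when $g_0$ has no decay beyond the hypothesis (\ref{99}). I would first verify the identity on the dense subspace spanned by finitely many $\phi_{\alpha,\beta}$, where (\ref{99}) makes every matrix element of $T_g$ absolutely convergent and Gutzmer's sum reduces to finitely many terms, and then extend by continuity using the bound $\|T_g\| = \sup_k |m_t(k)|$ obtained on that subspace.
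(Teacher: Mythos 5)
Your proposal is correct and follows essentially the same route as the paper: polarize Gutzmer's formula for the special Hermite expansion, use the $K$-invariance of $g_0(y,v)W_t(z,w)\,dz\,dw$ together with Fubini (justified by condition (4.2.4)) to diagonalize $T_g$ in the basis $\{\phi_{\alpha,\beta}\}$, and read off the multiplier sequence. Your closing remark about first working with finite linear combinations of the $\phi_{\alpha,\beta}$ is precisely what the paper does, since for $f_1=\Phi_{\alpha,\beta}$, $f_2=\Phi_{\mu,\nu}$ the Gutzmer sum collapses to a single term.
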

\begin{proof}
Clearly $\langle T_g \phi_{\alpha,\beta},\phi_{\mu,\nu}\rangle_{\CB_t^*(\C^{2n})}$
is well defined for all $ (\alpha,\beta)$ and $(\mu,\nu)$.
As done in Section 3.3 we can polarize Gutzmer's formula to obtain
$$\int_{\mathbb{R}^{2n}}\int_{K}e^{(u.y-v.x)}e^{-tL}f_1(\sigma(x+iy,u+iv))\overline{e^{-tL}f_2}(\sigma(x+iy,u+iv)) dx du d\sigma
$$ $$=\sum_{k=0}^{\infty}\frac{k!(n-1)!}{(k+n-1)!}e^{-(2k+n)2t}\langle f_1\times\varphi_k,f_2\times\varphi_k\rangle_{L^2(\mathbb{R}^{2n})}\varphi_k(2iy,2iv).$$
When $ f_1=\Phi_{\alpha,\beta}$ and $ f_2=\Phi_{\mu,\nu}$ the above identity
reduces to
\begin{equation}\label{c}
\int_{\mathbb{R}^{2n}}\int_{K}e^{(u.y-v.x)}
\phi_{\alpha,\beta}(\sigma(z,w))\overline{\phi_{\mu,\nu}}(\sigma(z,w))
dx du d\sigma
\end{equation}
$$=\sum_{j=0}^{\infty}\frac{j!(n-1)!}{(j+n-1)!}
e^{-(2j+n)2t}\langle \Phi_{\alpha,\beta}\times\varphi_j,\Phi_{\mu,\nu}
\times\varphi_j\rangle_{L^2(\mathbb{R}^{2n})}\varphi_j(2iy,2iv)$$
$$ =\delta_{\alpha,\mu}\delta_{\beta,\nu}\frac{k!(n-1)!}{(k+n-1)!}
e^{-(2k+n)2t}\varphi_k(2iy,2iv),$$
where $|\beta|=k.$
Writing the matrix coefficients explicitly
$$\langle T_g \phi_{\alpha,\beta},\phi_{\mu,\nu}\rangle_{\CB_t^*(\C^{2n})} =\int_{\mathbb{C}^{2n}}g_0(y,v)\phi_{\alpha,\beta}(z,w)\overline{\phi_{\mu,\nu}}(z,w)W_t(z,w)dz dw.$$
The above integral converges absolutely. Now, replace $(z,w)$ by $\sigma(z,w)$ where $\sigma\in K$.
Since $g_0(y,v)W_t(z,w)dz dw$ is invariant under the action of $ K $ we get
$$\langle T_g \phi_{\alpha,\beta},\phi_{\mu,\nu}\rangle_{\CB_t^*(\C^{2n})}$$
$$=\int_K\int_{\mathbb{C}^{2n}}g_0(y,v)\phi_{\alpha,\beta}(\sigma(z,w))
\overline{\phi_{\mu,\nu}}(\sigma(z,w))W_t(z,w)dz dw d\sigma.$$
The integral converges absolutely and hence  by applying Fubini's theorem
and using (\ref{c}) we get
$$\langle T_g \Phi_{\alpha,\beta},\Phi_{\mu,\nu}\rangle_{\CB_t^*(\C^{2n})}$$
$$ =\delta_{\alpha,\mu}\delta_{\beta,\nu}\frac{k!(n-1)!}{(k+n-1)!}
e^{-(2k+n)2t}\int_{\mathbb{R}^{2n}}g_0(y,v)p_{2t}(2y,2v)\varphi_k(2iy,2iv)dydv,$$
where $|\beta|=k$. Thus the operator $ T_g $ is diagonal in the basis 
$$ \{ \phi_{\alpha,\beta} : \alpha,\beta \in \N^n \} $$ 
and the theorem follows.
\end{proof}

Let $h$ be a radial measurable function on $\mathbb{R}^{2n}$ and assume that
\begin{equation}\label{63}
\int_{\mathbb{R}^{2n}}|h(y,v)|e^{s(|y|^2+|v|^2)}dy dv <\infty
\end{equation}
for all $ s > 0.$ Consider the symbol defined by
$$ g(x+iy,u+iv)p_{2t}(2y,2v)= h\times p_{2t}(2y,2v).$$

\begin{cor}In the above theorem let $ g $ be as above with  $h$ satisf
ying (\ref{63}). Then $T_g$ is bounded if and only if the sequence
$$\frac{k!(n-1)!}{(k+n-1)!}\int_{\mathbb{R}^{2n}}h(y,v)\varphi_k(iy,iv)dydv $$
is bounded.
\end{cor}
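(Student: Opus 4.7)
The plan is to reduce this corollary to Theorem 4.3 by evaluating in closed form the integral appearing there, with Lemma 4.4 as the main computational tool. First I would verify that $g$ is indeed a symbol to which Theorem 4.3 applies: since $p_{2t}$ is $K$-invariant and twisted convolution of two $K$-invariant functions is $K$-invariant, the quotient $g_0(y,v) := (h\times p_{2t})(2y,2v)/p_{2t}(2y,2v)$ is radial whenever $h$ is radial, so $g(x+iy,u+iv) = g_0(y,v)$ has the required form. The integrability hypothesis (\ref{99}) follows from combining the Gaussian decay of $W_t$ and of the $\phi_{\alpha,\beta}$ with the superexponential-in-$|\cdot|^2$ decay (\ref{63}) assumed on $h$.

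Starting from the integral in Theorem~4.3, I would substitute the defining relation for $g_0$ and then change variables $(y,v)\mapsto (y/2, v/2)$ to get
$$I_k := \int_{\R^{2n}} g_0(y,v) p_{2t}(2y,2v)\varphi_k(2iy,2iv)\,dy\,dv = c_n \int_{\R^{2n}} (h\times p_{2t})(y,v)\varphi_k(iy,iv)\,dy\,dv.$$
Unraveling the twisted convolution and applying Fubini's theorem (justified by (\ref{63}) together with the Gaussian decay of $p_{2t}$ and the polynomial-times-Gaussian size of $\varphi_k$) rewrites this as
$$I_k = c_n \int_{\R^{2n}} h(y',v') \left[ \int_{\R^{2n}} p_{2t}(y-y', v-v') e^{i(yv' - vy')/2} \varphi_k(iy,iv)\,dy\,dv \right] dy'\,dv'.$$
The bracketed inner integral is, after using the evenness of $p_{2t}$ and renaming $(x,u)\leftrightarrow (y',v')$, exactly the left-hand side of Lemma~4.4 with $t$ replaced by $2t$; it therefore equals $\varphi_k(iy',iv')\, e^{(2k+n)2t}$. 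Substituting this back produces
$$I_k = c_n\, e^{(2k+n)2t} \int_{\R^{2n}} h(y',v') \varphi_k(iy',iv')\,dy'\,dv',$$
and inserting this into the boundedness criterion from Theorem~4.3 cancels the factor $e^{-(2k+n)2t}$, leaving precisely the sequence stated in the corollary.

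The main obstacle I anticipate is the careful bookkeeping of the symplectic phase in the twisted convolution so that, after the change of variables and application of Fubini, the kernel matches the one in Lemma~4.4 exactly; once the sign conventions are fixed and the evenness of $p_{2t}$ is invoked, the remaining justifications (radiality of $h\times p_{2t}$, applicability of Fubini, and absolute convergence of the final integral defining the multiplier sequence) are routine consequences of the decay hypothesis (\ref{63}).
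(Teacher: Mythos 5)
Your proposal is correct and follows essentially the same route as the paper: both reduce the corollary to the preceding theorem's diagonal criterion, use the defining relation $g_0(y,v)p_{2t}(2y,2v)=h\times p_{2t}(2y,2v)$, apply Fubini (justified by the decay hypothesis (\ref{63})), and invoke the lemma on the twisted convolution of $p_{2t}$ against $\varphi_k(iy,iv)$ to produce the factor $e^{(2k+n)2t}$ that cancels the exponential in the theorem's sequence. The only differences are cosmetic (the off-by-one in your references to the theorem and lemma, and your extra care with the phase and the verification of the integrability hypothesis, which the paper takes for granted).
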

\begin{proof} As $ h $ and $ p_t $ are both  radial, so is $ g_0(y,v) = 
g(iy,iv).$ Hence by
Theorem 4.4 we know that $T_g $is bounded if and only if
$$ \frac{k!(n-1)!}{(k+n-1)!}e^{-(2k+n)2t}\int_{\mathbb{R}^{2n}}g_0(y,v)
p_{2t}(2y,2v)\varphi_k(2iy,2iv) dy dv.$$
As $g(iy,iv)p_{2t}(2y,2v)= h\times p_{2t}(2y,2v)$ the above simplifies to
\begin{equation}\label{65}
\frac{k!(n-1)!}{(k+n-1)!}e^{-(2k+n)2t}\int_{\mathbb{R}^{2n}}
h\times p_{2t}(2y,2v)\varphi_k(2iy,2iv) dy dv.
\end{equation}
Because of (\ref{63}) we can use Fubini's theorem to change the order of
integration. By Lemma 4.3 we have
$$ \int_{\mathbb{R}^{2n}}p_{2t}(x-2y,u-2v)e^{i(u.y-v.x)}\varphi_k(2iy,2iv)dydv
= e^{(2k+n)2t}\varphi_k(ix,iu).$$
Using this in  (\ref{65}) we obtain the corollary.
\end{proof}

From now on let us assume that $g$ is  a measurable function on
$\mathbb{C}^{2n}$ such that
$\int_{\mathbb{C}^{2n}}|g(z,w)\phi_{\alpha,\beta}(z,w)|^2 W_t(z,w)
dzdw < \infty $ for all $\alpha,\beta .$ We will refer to this condition as $(**).$
Note that the condition $(**)$ on $g$ implies that it belongs to
$$L^2(\C^{2n},e^{(u.y-v.x)}e^{-\frac{(|x|^2+|u|^2)}{2}}
e^{(-\coth 2t +\frac{1}{2})(|y|^2+|v|^2)}dzdw).$$
For such symbols the Toeplitz operator on $\CB_t^*(\C^{2n})$
is defined by $T_g(f):=P(gf)$ where  $P$ is the orthogonal projection
from $ L^2(\C^{2n}, W_t) $ onto $\CB_t^*(\C^{2n}).$  We study the class of
symbols $ g $ for which $ T_g $ is bounded and $ e^{tL}T_ge^{-tL}$ is a right Weyl
multiplier, i.e. $ W\left(e^{tL}T_ge^{-tL}f\right)= W(f)M_t $ for some 
$ M_t\in\mathcal{B}(L^2(\mathbb{R}^{2n})).$

Before proving the next theorem we prove a lemma which will be used. Let 
$$V_t(z,w)=e^{(u.y-v.x)}e^{-\frac{(|x|^2+|u|^2)}{2}}
 e^{(-\coth 2t +\frac{1}{2})(|y|^2+|v|^2)}$$
and consider the measure $d\tau(z,w) =V_t(z,w)dzdw$, where $dzdw$
is the Lebesgue measure on $\R^{4n}.$
Let $\CP(\R^{4n})$ be the set of all polynomials on $\R^{4n}.$ Note that  
$\CP(\R^{4n}) \subset L^2(\C^{2n},d\tau(z,w)).$
\begin{lem}$\CP(\R^{4n})$ is dense in $L^2(\C^{2n},d\tau(z,w)).$
\end{lem}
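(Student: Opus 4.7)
The plan is to prove this by the standard duality approach: show that any $f \in L^2(\mathbb{C}^{2n}, d\tau)$ orthogonal to every polynomial must vanish a.e. The key observation is that $V_t$ has genuine Gaussian decay in all four real variables $(x,u,y,v)$, which will force the Fourier/Laplace transform of $\overline{f}\,V_t$ to be entire and hence determined by its moments.

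First I would verify the Gaussian decay of $V_t$. Writing the exponent of $V_t(z,w)$ as a quadratic form $Q(x,u,y,v)$ on $\R^{4n}$, the cross-terms $u\cdot y - v\cdot x$ are dominated by the quadratic diagonal since, on each block (e.g.\ the $(x,v)$-block), the matrix
$\bigl(\begin{smallmatrix} -1/2 & -1/2 \\ -1/2 & -(\coth 2t - 1/2)\end{smallmatrix}\bigr)$
has positive determinant $\tfrac{1}{2}(\coth 2t - 1) > 0$ for $t>0$ (and negative diagonal entries), hence is negative definite. Diagonalising block-wise gives $V_t(z,w) \leq C\,e^{-c(|x|^2+|u|^2+|y|^2+|v|^2)}$ for some $c = c(t) > 0$. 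In particular, for every $a>0$,
$$\int_{\R^{4n}} e^{a(|x|+|u|+|y|+|v|)} V_t(z,w)\,dz\,dw < \infty.$$

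Next, suppose $f \in L^2(d\tau)$ satisfies $\int_{\C^{2n}} p(z,w)\,\overline{f(z,w)}\,d\tau(z,w) = 0$ for every $p \in \CP(\R^{4n})$. Set $g(X) = \overline{f(X)}\,V_t(X)$, viewed as a function of $X = (x,u,y,v) \in \R^{4n}$. By Cauchy--Schwarz,
$$\int_{\R^{4n}} |g(X)|\,e^{a|X|}\,dX \leq \Bigl(\int |f|^2 V_t\,dX\Bigr)^{1/2} \Bigl(\int e^{2a|X|} V_t\,dX\Bigr)^{1/2} < \infty$$
for every $a > 0$, using the bound from Step~1. Consequently the Laplace--Fourier transform
$$G(\zeta) = \int_{\R^{4n}} g(X)\,e^{-i\zeta\cdot X}\,dX, \qquad \zeta \in \C^{4n},$$
is a well-defined entire function on $\C^{4n}$, and differentiation under the integral is justified to all orders.

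Finally, the Taylor coefficients of $G$ at the origin are, up to factors of $(-i)^{|\alpha|}$, the moments $\int X^\alpha\,g(X)\,dX = \int X^\alpha\,\overline{f(X)}\,d\tau(X)$, each of which vanishes by the orthogonality hypothesis applied to the polynomial $X^\alpha$. Hence $G \equiv 0$ on $\C^{4n}$, so $g = 0$ a.e.\ by Fourier inversion, and since $V_t > 0$ pointwise we conclude $f = 0$ a.e.\ in $L^2(d\tau)$. By Hahn--Banach (or directly, since the orthogonal complement of $\CP(\R^{4n})$ is trivial), $\CP(\R^{4n})$ is dense in $L^2(\C^{2n},d\tau)$. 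The main technical obstacle is the first step: one must confirm that the indefinite cross-term $e^{u\cdot y - v\cdot x}$ is genuinely absorbed by the quadratic part, which is where the positivity of $\coth 2t - 1$ for $t>0$ is essential.
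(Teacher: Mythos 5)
Your proof is correct. It is a genuine alternative to the paper's argument, though both rest on the same pivot: the indefinite cross term $e^{u\cdot y-v\cdot x}$ must be absorbed by the Gaussian part of $V_t$, which works precisely because $\coth 2t>1$. You establish this by checking that each $2\times 2$ block of the quadratic form is negative definite (determinant $\tfrac12(\coth 2t-1)>0$ with negative diagonal), whereas the paper completes the square, writing $V_t(z,w)=e^{-\frac12(|x+v|^2+|u-y|^2)}e^{-(\coth 2t-1)(|y|^2+|v|^2)}$, and then applies the shear $(x,u)\mapsto(x-v,u+y)$; these are two presentations of the same computation. The real divergence is in the endgame. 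The paper reduces the orthogonality relation, after the shear, to the statement that polynomials times a fixed Gaussian span $L^2(\R^{4n})$ --- i.e.\ it quotes completeness of the Hermite system. You instead prove density from scratch: the pointwise Gaussian bound gives finiteness of all exponential moments of $\overline{f}\,V_t$, so its Fourier transform extends to an entire function on $\C^{4n}$ whose Taylor coefficients at the origin are the (vanishing) polynomial moments, forcing $\overline{f}\,V_t=0$. Your route is longer but self-contained, and it makes transparent that the only property of $V_t$ actually used is the exponential-moment condition $\int e^{a|X|}V_t\,dX<\infty$ for all $a>0$, so the argument applies verbatim to any such weight; the paper's route is shorter given the cited fact about Hermite functions, but is tied to the specific Gaussian structure of $V_t$. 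Both are complete proofs.
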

\begin{proof}  By abuse of notation let us denote any polynomial $ p \in 
\CP(\R^{4n})$ by $ p(z,w).$ As the weight function $V_t(z,w)$ corresponding 
to $d\tau$ is nowhere vanishing,
it is enough to show that the linear span of $p(z,w)\left(V_t(z,w)\right)^{1/2}$ is dense in
$L^2(\C^{2n}).$ More precisely,
if there exists $g\in L^2(\C^{2n})$ such that
\begin{equation}\label{100}\int_{\C^{2n}}g(z,w)p(z,w)\left(V_t(z,w)\right)^{1/2}dzdw=0\end{equation}
for all $p\in\CP(\R^{4n})$
then we need to show $g=0.$ Now suppose that there exists 
$g$ satisfying (\ref{100}).
It is easy to see that by completing the square in $ V_t(z,w) $ 
(\ref{100}) can be rewritten as
\begin{equation}\label{101}
\int_{\C^{2n}}g(z-v,w+y)p(z-v,w+y)e^{-\frac{1}{4}(|x|^2+|u|^2)}e^{-\frac{\coth2t-1}{2}(|y|^2+|v|^2)}dzdw=0
\end{equation}
for all $p\in\CP(\R^{4n})$. If we let $\tilde{g}(z,w)=g(z+v,w-y) $ then it is 
clear that $ \tilde{g} \in L^2(\C^{2n})$ whenever $g\in L^2(\C^{2n})$ and 
$\|\tilde{g}\|_{ L^2(\C^{2n})}=\|g\|_{ L^2(\C^{2n})}.$ So, it is enough to 
show that $\tilde{g} = 0.$ The equation (4.2.9) means that
$$\int_{\C^{2n}} \tilde{g}(z,w)q(z,w)e^{-\frac{1}{4}(|x|^2+|u|^2)}
e^{-\frac{\coth2t-1}{2}(|y|^2+|v|^2)}dzdw=0$$
for all $ q \in\CP(\R^{4n})$. As the linear span of functions of the form
$$ q(z,w)e^{-\frac{1}{4}(|x|^2+|u|^2)} e^{-\frac{\coth2t-1}{2}(|y|^2+|v|^2)}$$ is dense in $L^2(\C^{2n}) $ the last equation implies  $\tilde{g}=0 $ 
proving the lemma.
\end{proof}
\begin{thm} Let a Lebesgue measurable function $ g $ on $\C^{2n}$ satisfy
 $(**)$ and let $T_g$ be the corresponding Toeplitz operator on 
$\CB_t^{*}(\C^{2n}).$ Then $T_g=0$ if and only if $g=0$ a.e.
\end{thm}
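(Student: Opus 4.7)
The plan is to prove the nontrivial direction: assuming $T_g=0$, conclude $g=0$ almost everywhere. Since $\{\phi_{\alpha,\beta} : \alpha,\beta \in \N^n\}$ is an orthonormal basis of $\CB_t^*(\C^{2n})$, the hypothesis $T_g=0$ is equivalent to
$$\int_{\C^{2n}} g(z,w)\,\phi_{\alpha,\beta}(z,w)\,\overline{\phi_{\mu,\nu}(z,w)}\,W_t(z,w)\,dz\,dw = 0$$
for all $\alpha,\beta,\mu,\nu \in \N^n$. The idea is to reduce this family of orthogonality relations to orthogonality against \emph{all} polynomials on $\R^{4n}$ and then invoke \lemref{100} (Lemma 4.6).

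The key computation I would carry out first is to rewrite the integrand in terms of the measure $d\tau = V_t\,dz\,dw$ from \lemref{100}. Each holomorphically extended special Hermite function factors as $\phi_{\alpha,\beta}(z,w) = c_{\alpha,\beta}\,P_{\alpha,\beta}(z,w)\,e^{-\frac{1}{4}(z^2+w^2)}$ with $P_{\alpha,\beta}$ a polynomial. A direct calculation combining the Gaussian product
$$e^{-\frac{1}{4}(z^2+w^2)}\,\overline{e^{-\frac{1}{4}(z^2+w^2)}} = e^{-\frac{1}{2}(|x|^2+|u|^2)}\,e^{\frac{1}{2}(|y|^2+|v|^2)}$$
with the definition of $W_t$ shows that
$$\phi_{\alpha,\beta}(z,w)\,\overline{\phi_{\mu,\nu}(z,w)}\,W_t(z,w) = c_{\alpha,\beta,\mu,\nu}\,P_{\alpha,\beta}(z,w)\,\overline{P_{\mu,\nu}(z,w)}\,V_t(z,w),$$
which is exactly a polynomial on $\R^{4n}$ times the weight $V_t$.

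Next I would argue that as $\alpha,\beta,\mu,\nu$ range over $\N^n$, the polynomials $P_{\alpha,\beta}(z,w)\,\overline{P_{\mu,\nu}(z,w)} = P_{\alpha,\beta}(z,w)\,P_{\mu,\nu}(\bar z,\bar w)$ linearly span all of $\CP(\R^{4n})$. This is because $\{\Phi_{\alpha,\beta}\}$ is a complete orthonormal basis of $L^2(\R^{2n})$ whose elements are polynomial multiples of a nowhere vanishing Gaussian, so the polynomial parts $\{P_{\alpha,\beta}\}$ span $\C[z,w]$; their antiholomorphic counterparts $\{P_{\mu,\nu}(\bar z,\bar w)\}$ span $\C[\bar z,\bar w]$; and the products therefore span $\C[z,w,\bar z,\bar w] = \CP(\R^{4n})$. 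Combining with the vanishing of all matrix coefficients yields
$$\int_{\C^{2n}} g(z,w)\,Q(z,w)\,V_t(z,w)\,dz\,dw = 0 \quad \text{for every } Q \in \CP(\R^{4n}).$$
Condition $(**)$ guarantees $g \in L^2(\C^{2n},d\tau)$ (as noted just before \lemref{100}), and by that lemma $\CP(\R^{4n})$ is dense there; the orthogonality relation then forces $g=0$ in $L^2(\C^{2n},d\tau)$, hence $g=0$ almost everywhere since $V_t$ is strictly positive.

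The step I expect to be the most delicate is justifying that the polynomial parts $\{P_{\alpha,\beta}\}$ genuinely exhaust $\C[z,w]$, rather than only a linearly independent proper subspace. The cleanest route is a degree/dimension count: for each $m$, the functions $\Phi_{\alpha,\beta}$ with $|\alpha|+|\beta|\le m$ are linearly independent (being orthogonal in $L^2(\R^{2n})$), and the subspace of $L^2(\R^{2n})$ spanned by polynomials of degree $\le m$ times $e^{-\frac{1}{4}(|x|^2+|\xi|^2)}$ has the matching finite dimension, so the two subspaces coincide. This, together with the holomorphic extension preserving polynomial structure, yields the required spanning property, after which the remaining steps are routine.
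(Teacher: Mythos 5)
Your proof is correct and follows essentially the same route as the paper's: reduce $T_g=0$ to the vanishing of $\int_{\C^{2n}} g\, P_{\alpha,\beta}\overline{P_{\mu,\nu}}\,V_t\,dz\,dw$ for all indices, show these products span $\CP(\R^{4n})$, and conclude via the density lemma for polynomials in $L^2(\C^{2n},d\tau)$. The only divergence is in justifying that the $P_{\alpha,\beta}$ span all holomorphic polynomials: you use a dimension count (the $\binom{m+2n}{2n}$ linearly independent polynomials $P_{\alpha,\beta}$ with $|\alpha|+|\beta|\le m$ must exhaust the space of polynomials of degree $\le m$ on $\R^{2n}$), whereas the paper expands the monomials $x^{\alpha}u^{\beta}$ in the Hermite polynomials adapted to $H(1/2)$ and hence in the $P_{\mu,\nu}$; both arguments are valid.
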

\begin{proof}When $g=0$ a.e. clearly  $T_g=0.$
Conversely, let $T_g=0.$ We need to prove that $g=0$ a.e.
By  using the explicit form of the functions $ \phi_{\alpha,\beta}$ namely,
$\phi_{\alpha,\beta}(z,w)=P_{\alpha,\beta}(z,w)e^{-\frac{z^2+w^2}{4}},$ 
where $P_{\alpha,\beta}$ are  holomorphic polynomials on $\C^{2n}$ of degree 
$|\alpha|+|\beta|$ the condition $(**)$
takes the form
$$\int_{\C^{2n}}|g(z,w)P_{\alpha,\beta}(z,w)|^2V_t(z,w)dzdw<\infty$$
for all $\alpha,\beta.$ 
The above also implies that
$g\in L^2(\C^{2n},V_t(z,w)dzdw)$ in particular.
In view of the previous  lemma, proving $g=0$ a.e. is equivalent to proving 
that
\begin{equation}\label{102}
\int_{\C^{2n}}g(z,w)p(z,w)V_t(z,w)dzdw=0
\end{equation} 
for all $p\in\CP(\R^{4n}).$ The assumption $T_g=0$ gives us for all 
$\alpha,\beta,\mu,\nu$
$$\langle T_g \phi_{\alpha,\beta},\phi_{\mu,\nu}\rangle_{\CB_t^*(\C^{2n})} = \int_{\mathbb{C}^{2n}}g(y,v)\phi_{\alpha,\beta}(z,w)\overline{\phi_{\mu,\nu}}(z,w)W_t(z,w)dz dw=0.$$
Again by using the explicit form of $\phi_{\alpha,\beta}$ we get
$$\int_{\mathbb{C}^{2n}}g(y,v)P_{\alpha,\beta}(z,w)\overline{P_{\mu,\nu}}(z,w)V_t(z,w)dzdw=0.$$

We claim that for every $ \alpha, \beta$, the monomial 
$z^{\alpha}w^{\beta}$ belongs to the linear span of 
$ \{ P_{\mu,\nu}(z,w): |\mu|+|\nu|=|\alpha|+|\beta| \}.$ This claim would 
then prove $(\ref{102})$ which in
turn would prove $g=0$ a.e. In fact, once we have the claim $(\ref{102})$ will 
be true for all polynomials of the form $ p(z,w) = z^{\alpha}w^{\beta}
\bar{z}^{\mu}\bar{w}^{\nu} $ which in turn will prove  $(\ref{102})$ for 
all monomials $ x^\alpha y^\beta u^\mu v^\nu $ and hence for all polynomials.
Returning to the claim it is sufficient to prove it  for $z,w $ purely real. 
We know that the special Hermite functions $\Phi_{\mu,\nu}(x,u)$ give all
the eigenfunctions of the dilated Hermite operator $H(1/2)= 
-\triangle+\frac{1}{4}(|x|^2+|u|^2)$
on $\R^{2n}.$ (see \cite{Th5}) More precisely,
$$H(1/2)\Phi_{\mu,\nu}=(|\mu|+|\nu|+n)\Phi_{\mu,\nu}.$$ If 
$ H_{\alpha,\beta}(x,u) $ stand for the (ordinary) Hermite polynomials on 
$ \R^{2n} $ (adapted to $ H(1/2) $) then it can be written as a linear 
combination of  $ P_{\mu,\nu}(x,u)$ with $ |\mu|+|\nu|=|\alpha|+|\beta| .$
It is well known that $x^{\alpha}u^{\beta} $ can be written as a linear 
combination of  $ H_{\mu,\nu} $ and hence as a linear combination of 
$ P_{\mu,\nu} $ as well. Thus $ g $ is orthogonal to all polynomials in 
$ L^2(\C^{2n},V_t) $ and this proves the result.
\end{proof}

We now characterise all the symbols $ g $ for which $ e^{tL}T_g e^{-tL}$ 
reduces to a Weyl multiplier. For this characterisation we need to consider 
symbols $ g $ so that $g_{a,b}(z,w):=g(z+a,w+b)$ 
satisfies condition $(**)$ for all $(a,b)\in\R^{2n}$.
\begin{thm} Let $g_{a,b}$ satisfy $(**)$ for all $ (a,b)\in\R^{2n} $ and 
let the corresponding
$T_g$ be a bounded operator on $\CB_t^*(\C^{2n})$. Then
$ e^{tL}T_g e^{-tL}$ is a right Weyl multiplier if and only if $g(z,w)=g(iy,iv).$
\end{thm}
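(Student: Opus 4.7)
The strategy is to characterize the right-Weyl-multiplier property as commutation of $T_g$ with a family of unitary operators on $\CB_t^*$ corresponding to left twisted translations on $L^2(\R^{2n})$, and then apply the injectivity Theorem 4.7 to the symbol $g$. For each $(a, b) \in \R^{2n}$ I introduce
\[
U_{(a,b)} F(z, w) := e^{(i/2)(b \cdot z - a \cdot w)}\, F(z - a, w - b)
\]
on $\CB_t^*$. A change of variables using the identities $W_t(z+a, w+b) = e^{b \cdot y - a \cdot v} W_t(z, w)$ and $|e^{(i/2)(b \cdot z - a \cdot w)}|^2 = e^{a \cdot v - b \cdot y}$ shows that $U_{(a,b)}$ is an isometry of $L^2(\C^{2n}, W_t)$; since it obviously preserves entire functions it maps $\CB_t^*$ into itself and hence commutes with the orthogonal projection $P$. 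Restricted to $\R^{2n}$, $(U_{(a,b)} F)|_{\R^{2n}}(x, u) = e^{(i/2)(b \cdot x - a \cdot u)} F(x - a, u - b)$ coincides with the left twisted translation $\delta_{(a,b)} \times F|_{\R^{2n}}$ (for the paper's symplectic form $[(x, u), (y, v)] = u \cdot y - v \cdot x$). Because left twisted convolution and right twisted convolution by $p_t$ commute by associativity, the isomorphism $B : L^2(\R^{2n}) \to \CB_t^*$ sending $f$ to the holomorphic extension of $e^{-tL} f$ intertwines left twisted translation $\tilde\tau_{(a, b)} f := \delta_{(a, b)} \times f$ with $U_{(a, b)}$, i.e., $U_{(a, b)} = B \tilde\tau_{(a, b)} B^{-1}$.

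Next I invoke the standard observation that an operator $A \in \mathcal{B}(L^2(\R^{2n}))$ is a right Weyl multiplier if and only if it commutes with every $\tilde\tau_{(a, b)}$. Indeed, under the Weyl transform $\tilde\tau_{(a, b)}$ corresponds to left multiplication of Hilbert--Schmidt operators by $\pi(a, b)$, and $\{\pi(a, b) : (a, b) \in \R^{2n}\}$ generates $\mathcal{B}(L^2(\R^n))$ in the weak operator topology by irreducibility of the Schr\"odinger representation, so its commutant in $\mathcal{B}(\mathrm{HS})$ is exactly the algebra of right multiplications. Transferring via $B$, the operator $e^{tL} T_g e^{-tL}$ is a right Weyl multiplier if and only if $T_g U_{(a, b)} = U_{(a, b)} T_g$ for every $(a, b) \in \R^{2n}$.

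The commutation is then a direct computation. A phase cancellation gives $U_{(a, b)} M_g U_{(a, b)}^{-1} = M_{g(\cdot - a,\, \cdot - b)}$, which combined with $PU_{(a, b)} = U_{(a, b)} P$ yields
\[
T_g U_{(a, b)} - U_{(a, b)} T_g \;=\; T_{g - g_{-a, -b}} \circ U_{(a, b)}, \qquad g_{-a, -b}(z, w) := g(z - a, w - b).
\]
The hypothesis that every $g_{c, d}$ satisfies $(**)$ ensures that $g - g_{-a, -b}$ also satisfies $(**)$, and invertibility of $U_{(a, b)}$ shows that $[T_g, U_{(a, b)}] = 0$ is equivalent to $T_{g - g_{-a, -b}} = 0$. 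Theorem 4.7 then forces $g(z, w) = g(z - a, w - b)$ almost everywhere, and demanding this for every $(a, b) \in \R^{2n}$ is the same as $g$ being independent of $(x, u)$, i.e., $g(z, w) = g(iy, iv)$. The converse is trivial since for such $g$ the shifted symbol $g_{-a, -b}$ equals $g$. The main technical point is setting up $U_{(a, b)}$ with the correct phase convention (the sign dictated by $[\cdot, \cdot]$) so that it genuinely transfers to $\tilde\tau_{(a, b)}$ under $B$; once that is done, everything reduces to a clean application of Theorem 4.7.
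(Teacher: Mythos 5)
Your proposal is correct and follows essentially the same route as the paper: both reduce the right-Weyl-multiplier property to commutation of $T_g$ with the (holomorphically extended) twisted translations acting unitarily on $\CB_t^*(\C^{2n})$, and then apply the injectivity result (Theorem 4.7) to the difference $g-g_{-a,-b}$ to conclude that $g$ is independent of $(x,u)$. The only notable divergence is that where the paper invokes Daubechies' result to pass between ``commutes with all twisted translations'' and ``is a right twisted-convolution operator,'' you supply a self-contained von Neumann commutant argument on Hilbert--Schmidt operators, which is a legitimate (and arguably cleaner) substitute for that citation.
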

\begin{proof} Let us first assume that $ T_g $ is bounded and corresponds to
a right Weyl multiplier $ M_t.$  As proved in \cite{D} we know that
$ M_t= W(\sigma)$, for some  $\sigma\in \mathcal{S}'(\mathbb{R}^{2n}).$
Therefore, we have
\begin{equation}\label{f}
e^{tL}T_ge^{-tL}f=f\times\sigma,
\end{equation}
for all  $f \in L^2(\R^{2n}).$ Recall that the twisted translations of 
functions on $\mathbb{R}^{2n}$ are defined by
$$\tau(a,b)f(x,u):=e^{-i/2(a.u-b.x)}f(x-a,u-b), (a,b)\in\mathbb{R}^{2n}.$$
Clearly, $\tau(a,b)$ is a unitary map on $L^2(\mathbb{R}^{2n}).$ It is easy to
check that $\tau(a,b)f\times g= \tau(a,b)(f\times g)$ when
$f,g\in L^2(\mathbb{R}^{2n}).$ This implies that $e^{tL}T_ge^{-tL}$ commutes 
with twisted translations (see (\ref{f})). As $e^{-tL}f=f\times p_t $,
$e^{-tL}f$ is equivariant under twisted translations.
By using the fact that $e^{-tL}$ is a unitary map from
$L^2(\mathbb{R}^{2n})$ onto $\CB_t^*(\C^{2n})$ and its equivariance under
twisted translations, we get that
$$\langle\tau(a,b)F,\tau(a,b)G\rangle_{\CB_t^*(\C^{2n})}=
\langle F,G\rangle_{\CB_t^*(\C^{2n})}$$
for all $(a,b)\in \mathbb{R}^{2n}$ and $F,G\in\CB_t^*(\C^{2n})$. (Here,
$\tau(a,b)$ on $\CB_t^*(\C^{2n})$ is the natural extension to holomorphic
functions.)

We will now show that $g(z,w)=g(iy,iv)$ a.e. In view of Theorem 4.7 and the 
hypothesis on $ g_{a,b} $ it is enough to show that 
$$\langle T_g\phi_{\alpha,\beta},\phi_{\mu,\nu}\rangle_{\CB_t^*(\C^{2n})}
= \langle T_{g_{a,b}}
\phi_{\alpha,\beta},\phi_{\mu,\nu}\rangle_{\CB_t^*(\C^{2n})}$$
for all $(a,b)$ and $\alpha, \beta, \mu, \nu .$ But this can be easily shown 
to be true. Indeed,  by making the change of variable 
$(z,w)\longrightarrow(z+a,w+b)$ in the equation
$$\langle T_g\tau(a,b)\phi_{\alpha,\beta},\tau(a,b)
\phi_{\mu,\nu}\rangle_{\CB_t^*(\C^{2n})} = $$
$$\int_{\C^{2n}}g(z,w)\tau(a,b)\phi_{\alpha,\beta}(z,w)\tau(a,b)\overline{\phi_{\mu,\nu}(z,w)}W_t(z,w)dzdw $$
it is easy to see that
$$ \langle T_g\tau(a,b)\phi_{\alpha,\beta},\tau(a,b)
\phi_{\mu,\nu}\rangle_{\CB_t^*(\C^{2n})}=\langle T_{g_{a,b}}
\phi_{\alpha,\beta},\phi_{\mu,\nu}\rangle_{\CB_t^*(\C^{2n})}.$$ 
Therefore, it is enough to show that
\begin{equation}\label{h}
\langle T_g\phi_{\alpha,\beta},\phi_{\mu,\nu}\rangle_{\CB_t^*(\C^{2n})}
= \langle T_g\tau(a,b)\phi_{\alpha,\beta},\tau(a,b)
\phi_{\mu,\nu}\rangle_{\CB_t^*(\C^{2n})}
\end{equation}
for all $(a,b) \in \R^{2n} $ and multi-indices $(\alpha,\beta) $ and 
$(\mu,\nu).$ In other words, we need to show that $T_g$ commutes with 
twisted translations, which is immediate as  $e^{tL}T_ge^{-tL}$ commutes 
with twisted translations and $e^{-tL}$ is equivariant under them.
This proves the first part of the theorem.

Conversely, assume that $g(z,w)=g(iy,iv)$ and $T_g$ is bounded.
Clearly, 
$$\langle T_g\phi_{\alpha,\beta},\phi_{\mu,\nu}\rangle_{\CB_t^*(\C^{2n})}
= \langle T_{g_{a,b}}
\phi_{\alpha,\beta},\phi_{\mu,\nu}\rangle_{\CB_t^*(\C^{2n})}$$ 
for all $(a,b)\in\R^{2n}.$
As shown earlier, this implies that
$$\langle T_g\phi_{\alpha,\beta},\phi_{\mu,\nu}\rangle_{\CB_t^*(\C^{2n})}
= \langle T_g\tau(a,b)\phi_{\alpha,\beta},\tau(a,b)
\phi_{\mu,\nu}\rangle_{\CB_t^*(\C^{2n})}.$$ So, $T_g$ commutes with
twisted translations. Again, as shown before $e^{tL}T_ge^{-tL}$
commutes with twisted translations as well. This means that there
exists $\sigma\in\CS'(\R^{2n})$ such that $$e^{tL}T_ge^{-tL}f=f\times\sigma$$
for all $f\in L^2(\R^{2n}).$ When we take the Weyl transform on
both sides we get $W(e^{tL}T_ge^{-tL}f)=W(f)W(\sigma).$ As $T_g$ is
bounded, this proves that $e^{tL}T_ge^{-tL}$ is a right Weyl multiplier.
\end{proof}
\section{Toeplitz operators associated to  symmetric spaces}
\setcounter{equation}{0}

Segal-Bargmann spaces associated to Riemannian symmetric spaces have been
studied by Hall \cite {H1}, Stenzel \cite{S} and Kr\"otz et al \cite{KOS}.
The situation of non-compact symmetric spaces is much more complicated
whereas the compact case is well understood as a weighted Bergman spaces. In
both cases we have Gutzmer's formula using which we can study Toeplitz
operators that correspond to Fourier multilpiers on the underlying group. In
this section we study such operators in the case of compact symmetric spaces,
extending some results of Hall \cite{H2}. the case of noncompact Riemannian
symmetric spaces will be taken up elsewhere.

\subsection{Lassalle-Gutzmer formula } Consider a compact symmetric space
$ X = U/K $ where  $ (U,K) $ is a compact symmetric
pair. We may assume that $ K $ is connected
and $ U $ is semisimple. We let $ \u = \k +\p $ stand for the Cartan
decomposition of $ \u $ and let $ \a $ be a Cartan subspace of $ \p.$
Functions  $ f $ on $ X $ can be viewed as  right $ K-$invariant functions
on $ U.$ If $ \pi \in \hat{U} $ then it can
be shown that $ \hat{f}(\pi) = 0 $ unless $ \pi $ is $ K-$spherical, i.e., the
representation space $ V $ of $ \pi $ has a unique $ K-$fixed vector $u.$ It
then follows that $ \hat{f}(\pi)v = (v,u)\hat{f}(\pi)u $ for any $ v \in V $
which means that $ \hat{f}(\pi) $ is of rank one. Let $ \hat{U}_K $ stand for
the equivalence classes of $K-$spherical representations of $ U .$ Then there
is a one to one correspondence between elements of $ \hat{U}_K $ and a
certain discrete subset $ \CP $ of $ \a^* $ called the set of restricted
dominant weights.

For each $ \lambda \in \CP $ let $ (\pi_\lambda, V_\lambda) $ be a spherical
representation of $ U $ of dimension $ d_\lambda.$ Let
$ \{v_j^\lambda, 1 \leq j \leq d_\lambda \} $ be an orthonormal basis for
$ V_\lambda $ with $ v_1^\lambda $ being the unique $ K$-fixed vector.
Then the functions
$$ \varphi_j^\lambda(g) =\langle\pi_\lambda(g)v_1^\lambda,v_j^\lambda\rangle $$
form an orthogonal family of right $ K -$invariant analytic functions on $ U $
and we can consider them as functions of the symmetric space. When
$ x = g.o \in X, $
we simply denote by $ \varphi_j^\lambda(x) $ the function
$ \varphi_j^\lambda(g.o).$ The function $\varphi_1^\lambda(g)$ is  $ K $
biinvariant, called an elementary spherical function. It is usually denoted
by $ \varphi_\lambda.$
The Fourier coefficients of  $ f \in L^2(X), $ are defined by
$$ \hat{f}_j(\lambda) = \int_X f(x)\overline{\varphi_j^\lambda(x)}dm_0(x)$$
and the Fourier series is written as
$$ f(x) = \sum_{\lambda \in \CP} d_\lambda \sum_{j=1}^{d_\lambda}
\left( \hat{f}_j(\lambda)\varphi_j^\lambda(x)\right).$$ Then the Plancherel theorem reads as
$$ \int_X |f(x)|^2 dm_0(x) = \sum_{\lambda \in \CP} d_\lambda
\left(\sum_{j=1}^{d_\lambda} |\hat{f}_j(\lambda) |^2 \right) .$$

Let $ U_\C $ (resp. $K_\C $) be the universal complexification of $ U $ (resp.
$ K$). The group $ K_\C $ sits inside $ U_\C $ as  a
closed subgroup. We may then consider the complex homogeneous space
$ X_\C = U_\C/K_\C, $ which is a complex variety and gives the
complexification of the
symmetric space $ X = U/K.$ The Lie algebra $ \u_\C $ of $ U_\C $ is the
complexified Lie algebra $ \u_\C = \u +i\u.$ For every $ g \in U_\C $
there exists $ u \in U $ and $ X \in \u $ such that $ g = u \exp iX.$
Let $ \Omega $ be any $ U $ invariant domain in $ X_\C $ and let $ \CO(\Omega)$
stand for the space of holomorphic functions on $ \Omega.$ The group $ U $
acts on $ \CO(\Omega)$ by $ T(g)f(z) = f(g^{-1}z).$ For each
$ \lambda \in \CP $ the matrix coefficients $ \varphi_j^\lambda $ extend
to $ X_\C $ as
holomorphic functions. When $ f \in \CO(\Omega), $ it can be shown that
the series
$$ f(z) = \sum_{\lambda \in \CP} d_\lambda \sum_{j=1}^{d_\lambda}
\hat{f}_j(\lambda)\varphi_j^\lambda(z) $$
converges uniformly over compact subsets of $ \Omega.$ The above series
is called the Laurent expansion of $ f $ and we  have the following formula
known as  Gutzmer's formula for $ X.$

\begin{thm} For every $ f \in \CO(X_\C) $ and
$ H \in i\a, $ we have
$$ \int_U |f(g.\exp(H).o)|^2 dg = \sum_{\lambda \in \CP} d_\lambda
\left(\sum_{j=1}^{d_\lambda} |\hat{f}_j(\lambda) |^2 \right)
\varphi_\lambda(\exp(2H).o).$$
\end{thm}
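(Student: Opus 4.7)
The plan is to substitute the Laurent expansion into the left-hand side and reduce the computation, via Schur orthogonality over $U$, to an identity in the single representation $\pi_\lambda$; the key observation being that $\pi_\lambda(\exp H)$ is a Hermitian positive operator when $H\in i\a$, so that its action on the spherical vector $v_1^\lambda$ encodes precisely the value $\varphi_\lambda(\exp(2H).o)$.

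Concretely, I would first write, for each $\lambda\in\CP$,
$$\varphi_j^\lambda(g.\exp(H).o)=\langle \pi_\lambda(g)\pi_\lambda(\exp H)v_1^\lambda,\,v_j^\lambda\rangle,$$
and expand $\pi_\lambda(\exp H)v_1^\lambda=\sum_k c_k^\lambda(H)v_k^\lambda$ with $c_k^\lambda(H)=\langle \pi_\lambda(\exp H)v_1^\lambda,v_k^\lambda\rangle$. Plugging this into the Laurent series
$$f(g.\exp H.o)=\sum_{\lambda\in\CP} d_\lambda \sum_j \hat f_j(\lambda)\sum_k c_k^\lambda(H)\langle \pi_\lambda(g)v_k^\lambda,v_j^\lambda\rangle,$$
I then compute $|f(g.\exp H.o)|^2$ and integrate over $U$. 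By Schur orthogonality,
$$\int_U \langle \pi_\lambda(g)v_k^\lambda,v_j^\lambda\rangle\,\overline{\langle \pi_\mu(g)v_m^\mu,v_l^\mu\rangle}\,dg=\frac{1}{d_\lambda}\delta_{\lambda\mu}\delta_{km}\delta_{jl},$$
which collapses the quadruple sum to
$$\int_U|f(g.\exp H.o)|^2\,dg=\sum_{\lambda\in\CP} d_\lambda \Big(\sum_j |\hat f_j(\lambda)|^2\Big)\sum_k |c_k^\lambda(H)|^2.$$

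The remaining step is to recognise that $\sum_k|c_k^\lambda(H)|^2=\|\pi_\lambda(\exp H)v_1^\lambda\|^2$ equals $\varphi_\lambda(\exp(2H).o)$. For this I use that $d\pi_\lambda$ sends $\u$ to skew-Hermitian operators, so $d\pi_\lambda(iY)$ is Hermitian for $Y\in\u$; hence $\pi_\lambda(\exp H)$ is Hermitian positive for $H\in i\a\subset i\u$, and therefore $\pi_\lambda(\exp H)^*\pi_\lambda(\exp H)=\pi_\lambda(\exp 2H)$. This yields
$$\|\pi_\lambda(\exp H)v_1^\lambda\|^2=\langle \pi_\lambda(\exp 2H)v_1^\lambda,v_1^\lambda\rangle=\varphi_\lambda(\exp(2H).o),$$
the last equality because $v_1^\lambda$ is the $K$-fixed vector so that the matrix coefficient is the elementary spherical function.

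The main technical point, rather than the algebra, will be justifying the interchange of summation and integration. For this I would first prove the identity for $f$ a finite sum of the $\varphi_j^\lambda$ (where everything is literal), and then extend by a density/limit argument using the fact stated before the theorem that the Laurent series of $f\in\CO(X_\C)$ converges uniformly on compacta; truncating to $|\lambda|\le N$, applying the finite case, and monotonic passage on the right with dominated convergence on the left (on a compact set containing the $U$-orbit of $\exp(H).o$) yields the formula in full generality.
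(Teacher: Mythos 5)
Your argument is correct. The paper does not prove this theorem at all---it attributes it to Lassalle and refers to \cite{La1}, \cite{La2} (and Faraut \cite{Fa1}) for the proof---and what you have written is essentially that standard argument: Schur orthogonality collapses the squared Laurent series to $\sum_{\lambda} d_\lambda\bigl(\sum_j|\hat f_j(\lambda)|^2\bigr)\,\|\pi_\lambda(\exp H)v_1^\lambda\|^2$, and the identification $\|\pi_\lambda(\exp H)v_1^\lambda\|^2=\langle\pi_\lambda(\exp 2H)v_1^\lambda,v_1^\lambda\rangle=\varphi_\lambda(\exp(2H).o)$ rests, exactly as you say, on $\pi_\lambda(\exp H)$ being positive self-adjoint because $d\pi_\lambda$ is skew-Hermitian on $\u$ and $H\in i\a$. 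Your treatment of the interchange of summation and integration (prove the finite case literally, then truncate, use uniform convergence of the Laurent series on the compact orbit $U\exp(H).o$ for the left side and monotone convergence on the right, where every term is nonnegative) is also sound.
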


This theorem is due to Lassalle, see \cite{La1} and \cite{La2} for a proof.
This formula has
been used by Faraut \cite{Fa1} to give an elegant proof of a theorem of
Stenzel \cite{S} on
the Segal-Bargmann transform for the compact symmetric space $ X.$ The second
author has used the same to study holomorphic Sobolev spaces in \cite{Th7}.

\subsection{Segal-Bargmann transform on $ X $} Let $ \Delta $ stand for the
Laplace-Beltrami operator on $ X $ suitably shifted so that its spectrum
consists of $ |\lambda+\rho|^2 $ where $ \lambda \in \CP $ and $ \rho $ is
the half sum of positive roots (see Faraut \cite{Fa1}). The solution of 
the heat
equation associated to $ \Delta $ with initial condition $ f \in L^2(X) $
is given by the expansion
$$ u(x,t) = \sum_{\lambda \in \CP} d_\lambda e^{-t|\lambda+\rho|^2}\left(
\sum_{j=1}^{d_\lambda} \hat{f}_j(\lambda)\varphi_j^\lambda(x)\right) .$$
By defining the heat kernel $ \gamma_t $ by
$$ \gamma_t(x) = \sum_{\lambda \in \CP} d_\lambda e^{-t|\lambda+\rho|^2}
\left(\sum_{j=1}^{d_\lambda} \varphi_j^\lambda(x)\right) $$
the solution can be written as $ u(g,t) = f*\gamma_t(g) $ where the
convolution is taken on $ U.$ For $ f \in L^2(X) $ it can be shown that the
solution $ u $ extends to $ X_\C $ as a holomorphic function. The map taking
$ f $ into $ u(z,t) = f*\gamma_t(g), z = g.o $ is called the Segal-Bargmann
transform and has been studied by Hall \cite{H1}, Stenzel \cite{S} and others.

The image of $ L^2(X) $ under the Segal-Bargmann transform has been
characterised by Stenzel \cite{S} as a weighted Bergman space. The weight function
$ w_t $ is given in terms of the heat kernel on the noncompact dual $ Y $ of
$ X.$ Consider the group $ G = K\exp(i\p) $ whose Lie algebra is $ \k+i\p.$
Under the assumption that $ U $ is semisimple, $ G $ turns out to be a real
semisimple group and $ K $ a maximal compact subgroup. The noncompact dual
is then defined as $ Y = G/K.$ Let $ \Delta_G $ be the Laplace-Beltrami
operator on $ Y $ with heat kernel defined by
$$ \gamma_t^1(g) = \int_{(i\a)^*} e^{-t(|\lambda|^2+|\rho|^2)}\psi_\lambda(g)
|c(\lambda)|^{-2} d\lambda.$$ Here $ \psi_\lambda $ are the spherical
functions on $ Y.$ Define a weight function $ w_t(z) $ on
$ X_\C $ by $ w_t(z) = \gamma_{2t}^1(\exp(2H)), z = u\exp(H),
u \in U, H \in i\a.$ Then we have the following result.

\begin{thm} The Segal-Bargmann transform is an isometric isomorphism between
$ L^2(X) $ and the space of all holomorphic functions on $ X_\C $ that are
square integrable with respect to $ w_t(z) dm $ where $ dm $ is the invariant
measure on $ X_\C.$
\end{thm}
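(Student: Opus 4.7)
The plan is to prove this by combining Lassalle--Gutzmer's formula (Theorem 5.1) with the spectral representation of the heat kernel $\gamma^1_{2t}$ on the noncompact dual $Y$, following the approach Faraut used in \cite{Fa1}. This reduces the identity of norms to a Plancherel--type identity on $Y$.

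First, I would check that for $f \in L^2(X)$ the function $F(z) = f*\gamma_t(z)$ extends holomorphically to all of $X_\C$. This follows because $\gamma_t$ has a Laurent expansion with Gaussian decay $e^{-t|\lambda+\rho|^2}$ in $\lambda$, so $F$ has the convergent expansion
$$ F(z) = \sum_{\lambda\in\CP} d_\lambda e^{-t|\lambda+\rho|^2}\sum_{j=1}^{d_\lambda}
\hat{f}_j(\lambda)\varphi_j^\lambda(z),$$
and the $\varphi_j^\lambda$ extend to $X_\C$ as holomorphic functions. Note that the Fourier coefficients of $F$ on $X$ are $e^{-t|\lambda+\rho|^2}\hat{f}_j(\lambda)$.

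Next, I would invoke the polar decomposition $X_\C = U \cdot \exp(i\a_+)\cdot o$ so that the $U$-invariant measure on $X_\C$ factors as $dm = c\, du\, |J(H)|\, dH$ on $U \times i\a_+$ (with $J$ a Weyl-type Jacobian). Applying Theorem 5.1 at $z = u\exp(H).o$ and integrating $|F(u\exp(H).o)|^2$ first over $u \in U$ gives
$$ \int_U |F(u\exp(H).o)|^2\,du
= \sum_{\lambda\in\CP} d_\lambda e^{-2t|\lambda+\rho|^2}
\Big(\sum_{j=1}^{d_\lambda}|\hat{f}_j(\lambda)|^2\Big)
\varphi_\lambda(\exp(2H).o).$$
Multiplying by $w_t(z) = \gamma^1_{2t}(\exp(2H))$ and integrating over $i\a_+$ against $|J(H)|\,dH$, and using the duality which identifies $\varphi_\lambda(\exp(2H).o)$ (analytically continued to $i\a$) with the spherical function $\psi_{-i(\lambda+\rho)}$ on $Y$ evaluated at $\exp(2H)$, the whole computation will reduce to verifying the single identity
\begin{equation*}
\int_{i\a_+} \varphi_\lambda(\exp(2H).o)\,\gamma^1_{2t}(\exp(2H))\,|J(H)|\,dH
\;=\; e^{2t|\lambda+\rho|^2},
\end{equation*}
for each $\lambda\in\CP$. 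Plugging into the right-hand side gives exactly $\sum_\lambda d_\lambda \sum_j |\hat f_j(\lambda)|^2 = \|f\|_{L^2(X)}^2$ by Plancherel on $X$, which is the isometry claim.

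The main obstacle is the starred identity above: it must come from the spherical Plancherel / inversion formula on $Y$ applied to $\gamma^1_{2t}$. Substituting the defining expression $\gamma^1_{2t}(g)=\int_{(i\a)^*} e^{-2t(|\mu|^2+|\rho|^2)}\psi_\mu(g)|c(\mu)|^{-2}d\mu$, converting the $H$-integral into a full integral over the symmetric space $Y$ via the Weyl integration formula, and using orthogonality of the $\psi_\mu$ against $\varphi_\lambda \equiv \psi_{-i(\lambda+\rho)}$ via the Harish-Chandra $c$-function duality collapses the integral to the single Gaussian exponential $e^{2t|\lambda+\rho|^2}$. Once this identity is established, the isometry part of Theorem 5.2 follows; surjectivity onto the weighted Bergman space is then obtained by showing that every holomorphic $F$ in the weighted $L^2$ space has a Laurent expansion whose coefficients $a_j^\lambda$ satisfy $\sum_\lambda d_\lambda \sum_j |a_j^\lambda|^2 e^{2t|\lambda+\rho|^2} < \infty$, so that $f := \sum_\lambda d_\lambda\sum_j a_j^\lambda e^{t|\lambda+\rho|^2}\varphi_j^\lambda$ lies in $L^2(X)$ and has $F = f*\gamma_t$.
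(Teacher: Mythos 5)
Your proposal is correct and follows essentially the same route the paper itself points to: the paper does not prove Theorem 5.2 but attributes it to Stenzel and explicitly cites Faraut's proof via Lassalle's (Gutzmer's) formula, the relation $\varphi_\lambda(\exp(H)) = \psi_{-i(\lambda+\rho)}(\exp(H))$, and the integration formula with $J(H) = J_1(2H)$ --- exactly the ingredients you assemble. Your key identity reducing the computation to the spherical transform of $\gamma^1_{2t}$ at $-i(\lambda+\rho)$ is the same one the paper verifies implicitly in the proof of Theorem 5.3 and checks explicitly (for $g_0 = 1$) in Section 5.4, so the argument is sound.
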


This theorem is due to Stenzel \cite{S}; for an elegant proof using Gutzmer's
formula see Faraut \cite{Fa1}. The key ingredient in Faraut's proof is Lassalle's
formula and the following relation between $ \varphi_\lambda $ and
$ \psi_\lambda ,$ namely
$$ \varphi_\lambda(\exp(H)) = \psi_{-i(\lambda+\rho)}(\exp(H)), H \in i\a.$$
To conclude this subsection let us recall the following integration formulas
on $ X_\C $ and $ Y $:
$$ \int_{X_\C} f(x) dm(x) = c \int_U \int_{i\a} f(u\exp(H).o)J(H) du dH ,$$
$$ \int_{Y} f(x) dm_1(x) = c \int_K \int_{i\a} f(u\exp(H).o)J_1(H) du dH .$$
Here the Jacobians $ J $ and $ J_1 $ are defined in terms of the roots, see
Faraut \cite{Fa1}. We need the following fact that $ J(H) = J_1(2H).$

\subsection{Toeplitz operators and Fourier multipliers} Given a symbol
$ g(z) $ defined on $ X_\C $ we consider the Toeplitz operator $ T_g $ on the
Segal-Bargmann space $ HL^2(X_\C,w_t).$ In this subsection we are interested
in finding symbols $ g $ so that $ e^{t\Delta}T_ge^{-t\Delta} $ is a Fourier
multiplier on $ L^2(X, dm_0).$ Given a bounded function $ a(\lambda) $ the
Fourier multiplier $ a(D) $ is defined by
$$ a(D)f(x) =  \sum_{\lambda \in \CP} d_\lambda a(\lambda)
\left( \sum_{j=1}^{d_\lambda}\hat{f}_j(\lambda)\varphi_j^\lambda(x)\right) $$
for all $ f \in L^2(X,dm_0).$ It is clear that $ a(D) $ is bounded if
and only if $ a $ is bounded. Using Gutzmer's formula we can easily prove the
following result.

\begin{thm} Suppose $ h $ is a $K-$biinvariant distribution on $ G $ so that
$ h*\gamma_{2t}^1 $ is well defined. Let
$ g(z)w_t(z) = h*\gamma_{2t}^1(\exp(2H)) $
whenever $ z = u\exp(H), u \in U, H \in i\a.$ Then $  e^{t\Delta}T_g
e^{-t\Delta}= a(D) $ where
$$ a(\lambda) = \int_{i\a} h(\exp(H))\psi_{-i(\lambda+\rho)}
(\exp(H)) J_1(H) dH.$$
\end{thm}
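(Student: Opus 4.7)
The plan is to mirror the strategy used for the Hermite-Bergman and twisted Bergman cases: compute the bilinear form $\langle T_g F_1, F_2 \rangle_{HL^2(X_\C, w_t)}$ for $F_j = f_j * \gamma_t \in HL^2(X_\C, w_t)$ with $f_j \in L^2(X)$, and read off the Fourier multiplier from a diagonalization produced by the Lassalle-Gutzmer formula.

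First I would expand
$$\langle T_g F_1, F_2 \rangle_{HL^2} = \int_{X_\C} g(z) F_1(z) \overline{F_2(z)} w_t(z) \, dm(z)$$
and apply the integration formula on $X_\C$ to convert this into a double integral over $U \times i\a$ with Jacobian $J(H)$. Since the hypothesis gives $g(z) w_t(z) = h*\gamma_{2t}^1(\exp(2H))$ for $z = u\exp(H).o$, and this factor depends only on $H$, the $u$-integration is isolated:
$$c \int_{i\a} h*\gamma_{2t}^1(\exp(2H)) \left( \int_U F_1(u\exp(H).o) \overline{F_2(u\exp(H).o)} \, du \right) J(H) \, dH.$$

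Next I would apply the polarized form of the Lassalle-Gutzmer formula (Theorem 5.1) to the inner $U$-integral. Polarization gives
$$\int_U F_1(u\exp(H).o)\overline{F_2(u\exp(H).o)} du = \sum_{\lambda \in \CP} d_\lambda \left(\sum_j \hat{F_1}_j(\lambda) \overline{\hat{F_2}_j(\lambda)}\right) \varphi_\lambda(\exp(2H).o).$$
Since $F_j = f_j * \gamma_t$ one has $\hat{F_j}_\ell(\lambda) = e^{-t|\lambda+\rho|^2} \hat{f_j}_\ell(\lambda)$, producing the factor $e^{-2t|\lambda+\rho|^2}$. After interchanging sum and integral (justified by the assumption that $h*\gamma_{2t}^1$ is well defined, so we may work initially with finite Laurent sums and pass to the limit), the bilinear form becomes
$$\sum_{\lambda \in \CP} d_\lambda e^{-2t|\lambda+\rho|^2} \left(\sum_j \hat{f_1}_j(\lambda) \overline{\hat{f_2}_j(\lambda)}\right) I(\lambda),$$
where
$$I(\lambda) = c \int_{i\a} h*\gamma_{2t}^1(\exp(2H)) \varphi_\lambda(\exp(2H).o) J(H) \, dH.$$

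The third step is to recognize $I(\lambda)$ as a spherical integral on $Y$. Using the substitution $H \mapsto H/2$ together with the identity $J(H) = J_1(2H)$ and the relation $\varphi_\lambda(\exp H) = \psi_{-i(\lambda+\rho)}(\exp H)$ for $H \in i\a$, the integral $I(\lambda)$ becomes (up to a fixed constant) $\int_{i\a} h*\gamma_{2t}^1(\exp H) \psi_{-i(\lambda+\rho)}(\exp H) J_1(H)\, dH$, i.e., the spherical Fourier transform of $h*\gamma_{2t}^1$ on $Y$ evaluated at $-i(\lambda+\rho)$. Since both $h$ and $\gamma_{2t}^1$ are $K$-biinvariant, this factors as the product of their spherical transforms, and the heat-kernel factor equals $e^{2t|\lambda+\rho|^2}$ (up to the $|\rho|^2$ normalization absorbed into the convention for $\Delta$). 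The exponential cancels exactly the $e^{-2t|\lambda+\rho|^2}$ produced earlier, leaving precisely $a(\lambda)$ as defined in the statement. Concluding, $\langle T_g F_1, F_2 \rangle_{HL^2} = \langle a(D) f_1, f_2 \rangle_{L^2(X)}$, which is equivalent to $e^{t\Delta} T_g e^{-t\Delta} = a(D)$.

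The main obstacle is the bookkeeping of the heat-kernel exponentials and constants: one must show that the factor $e^{-2t|\lambda+\rho|^2}$ from Segal-Bargmann norm-squared is cancelled precisely by the spherical transform of $\gamma_{2t}^1$ at the shifted parameter $-i(\lambda+\rho)$, using the convention in the paper that shifts the Laplacian spectrum to $|\lambda+\rho|^2$. The other delicate point is justifying the exchange of sum and integral when $h$ is only a distribution; this is handled by the standing assumption that $h*\gamma_{2t}^1$ is well defined, which gives enough decay (through the $\gamma_{2t}^1$ factor) to apply Fubini on the spherical expansion.
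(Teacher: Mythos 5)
Your proposal is correct and follows essentially the same route as the paper: polarize the Lassalle--Gutzmer formula, integrate against $h*\gamma_{2t}^1(\exp(2H))\,J(H)\,dH$, and use $J(H)=J_1(2H)$ together with $\varphi_\lambda(\exp H)=\psi_{-i(\lambda+\rho)}(\exp H)$ to identify the resulting integral as $e^{2t|\lambda+\rho|^2}\tilde h(-i(\lambda+\rho))$, cancelling the Segal--Bargmann factor. Your added remarks on the Fubini justification and the $|\rho|^2$ normalization are points the paper passes over silently, but they do not change the argument.
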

\begin{proof} When $ F, F' \in  HL^2(X_\C,w_t) $ we can use the polarised form
of Gutzmer's formula to get
$$ \int_U F(u\exp(H).o)\overline{F'(u\exp(H).o)} du $$
$$ =  \sum_{\lambda \in \CP} d_\lambda e^{-2(|\lambda+\rho|^2)t}
\left(\sum_{j=1}^{d_\lambda}\hat{f}_j(\lambda) \overline{\hat{f'}_j(\lambda)}
\right)\varphi_\lambda(\exp(2H)) $$
where $ F = f*\gamma_t $ and $ F' = f'*\gamma_t.$ Integrating the above with
respect to $ h*\gamma_{2t}^1(\exp(2H)) J(H) dH $ and recalling the
definition of $g(z) $ we obtain
$$ \int_{X_\C} F(z)\overline{F'(z)}g(z)w_t(z)dm(z)  =  \sum_{\lambda \in \CP}
d_\lambda e^{-2(|\lambda+\rho|^2)t}$$
$$ \times
\left(\sum_{j=1}^{d_\lambda}\hat{f}_j(\lambda) \overline{\hat{f'}_j(\lambda)}
\right) \int_{i\a} \varphi_\lambda(\exp(2H))
h*\gamma_{2t}^1(\exp(2H)) J(H) dH  .$$ As $ J(H) = J_1(2H) $ and
$ \varphi_\lambda(\exp(H)) = \psi_{-i(\lambda+\rho)}(\exp(H)) $ the integral
on the right hand side reduces to
$$ \int_{i\a}h*\gamma_{2t}^1(\exp(H))\psi_{-i(\lambda+\rho)}(\exp(H))J_1(H)
dH = e^{2(|\lambda+\rho|^2)t}\tilde{h}(-i(\lambda+\rho)).$$ Thus we have
$$ \int_{X_\C} T_gF(z)\overline{F'(z)}w_t(z)dm(z) = \int_X a(D)f(x)
\overline{f'(x)} dm_0(x) $$
which proves the theorem.
\end{proof}

\begin{rem} When we take $ h $ to be the distribution  $ p(\Delta)\delta_e $
where $ p $ is a polynomial it follows that $ a(\lambda) =
p(-|\lambda+\rho|^2) $ so that $ a(D) = p(i\Delta).$ Hence the differential
operator $ p(i\Delta) $ corresponds to the Toeplitz operator with $ T_g $
with symbol $ g(z) = \gamma_{2t}(\exp(2H))^{-1}
p(\Delta)\gamma_{2t}(\exp(2H)), z = u\exp(H).o .$ In the context of compact
Lie groups $ U $, Hall \cite{H2} has considered more general differential operators
on $ U $  and studied the symbols of Toeplitz operators corresponding to them
using a different method.
\end{rem}

\subsection{Some remarks on compact Lie groups} Let us rewrite our theorem in
the previous section as follows. Given a $ K-$biinvariant function $ g_0 $ on
$ Y = G/K $ define $ g(z) = g_0(\exp(2H)), z = u\exp(H).o, H \in i\a.$ Then we 
have

\begin{cor} Let $ g $ be as above. Then the Toeplitz operator $ T_g $ is 
bounded on $ HL^2(X_\C,w_t dm) $ if and only if
$$  | \int_{i\a} g_0(\exp(H))\gamma_{2t}^1(\exp(H))
\psi_{-i(\lambda+\rho)}(\exp(H) J_1(H) dH | \leq C e^{2t|\lambda+\rho|^2} $$
for all $ \lambda \in \CP.$
\end{cor}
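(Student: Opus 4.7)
The plan is to deduce Corollary 5.4 as a direct instance of Theorem 5.3. With $z=u\exp(H).o$ and $g(z)=g_0(\exp(2H))$, one has $g(z)w_t(z)=g_0(\exp(2H))\gamma_{2t}^1(\exp(2H))$, which matches the hypothesis of Theorem 5.3 provided we can produce a $K$-biinvariant distribution $h$ on $G$ satisfying $h*\gamma_{2t}^1=g_0\,\gamma_{2t}^1$ on $\exp(i\a)$. Spherical Fourier analysis on $Y=G/K$ yields such an $h$ formally via $\widetilde h(\nu)=e^{2t(|\nu|^2+|\rho|^2)}\,\widetilde{g_0\gamma_{2t}^1}(\nu)$, since the spherical transform of $\gamma_{2t}^1$ is $e^{-2t(|\nu|^2+|\rho|^2)}$.

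Once $h$ is in hand, Theorem 5.3 gives $e^{t\Delta}T_g e^{-t\Delta}=a(D)$, where
$$a(\lambda)=\int_{i\a}h(\exp(H))\,\psi_{-i(\lambda+\rho)}(\exp(H))\,J_1(H)\,dH.$$
The computation embedded in the proof of Theorem 5.3 established that
$$\int_{i\a}h*\gamma_{2t}^1(\exp(H))\,\psi_{-i(\lambda+\rho)}(\exp(H))\,J_1(H)\,dH=e^{2t|\lambda+\rho|^2}\,a(\lambda).$$
Substituting $h*\gamma_{2t}^1=g_0\,\gamma_{2t}^1$ on the left side, I would read off
$$a(\lambda)=e^{-2t|\lambda+\rho|^2}\int_{i\a}g_0(\exp(H))\,\gamma_{2t}^1(\exp(H))\,\psi_{-i(\lambda+\rho)}(\exp(H))\,J_1(H)\,dH.$$
Because the Segal-Bargmann transform is an isometric isomorphism between $L^2(X,dm_0)$ and $HL^2(X_\C,w_t\,dm)$ by Theorem 5.2, $T_g$ is bounded iff the conjugated operator $a(D)$ is bounded on $L^2(X,dm_0)$, which in turn is equivalent to $|a(\lambda)|\leq C$ for all $\lambda\in\CP$. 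This is exactly the stated inequality.

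The main obstacle is justifying the existence of $h$ as a tempered distribution, together with the absolute convergence of $h*\gamma_{2t}^1$, for a merely admissible $g_0$. If these regularity issues are delicate, I would bypass $h$ entirely and rerun the Gutzmer-based computation of Theorem 5.3 with the product $g_0\,\gamma_{2t}^1$ in place of $h*\gamma_{2t}^1$ throughout; concretely, apply the polarised Lassalle-Gutzmer formula to $\int_U F(u\exp(H).o)\overline{F'(u\exp(H).o)}\,du$ for $F=f*\gamma_t$ and $F'=f'*\gamma_t$, factor out the $u$-independent weight $g_0(\exp(2H))\gamma_{2t}^1(\exp(2H))$, change variables $H\mapsto H/2$ using $J(H)=J_1(2H)$, and use $\varphi_\lambda(\exp(H))=\psi_{-i(\lambda+\rho)}(\exp(H))$. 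The resulting identity expresses $\langle T_g F,F'\rangle_{HL^2}$ as a diagonal quadratic form in the Fourier coefficients of $f,f'$ with symbol exactly the $a(\lambda)$ above, after which the boundedness equivalence is immediate from Plancherel on $X$.
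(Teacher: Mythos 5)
Your proposal is correct and follows essentially the same route as the paper: the paper offers no separate argument for this corollary, presenting it explicitly as a rewriting of Theorem 5.3 with $h*\gamma_{2t}^1 = g_0\,\gamma_{2t}^1$, so that $a(\lambda)=\tilde h(-i(\lambda+\rho))$ equals $e^{-2t|\lambda+\rho|^2}$ times the displayed integral and boundedness of $T_g$ reduces to boundedness of $a(\lambda)$. Your fallback of re-running the Gutzmer computation directly with the weight $g_0(\exp(2H))\gamma_{2t}^1(\exp(2H))$ is just the proof of Theorem 5.3 unwound, and it cleanly sidesteps the existence of $h$ that you rightly flag as the only delicate point.
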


Let $ U $ be a compact semisimple Lie group which can be treated as a compact
symmetric space. In this case the group $ G $ turns out to be a complex Lie
group and hence the heat kernel $ \gamma_t^1 $ is explicitly known, see
Gangolli \cite{G}. We also have explicit expressions for the spherical functions
$ \varphi_\lambda $ (Weyl character formula) and $ \psi_\lambda.$ More
precisely,
$$ \psi_\lambda(\exp(H)) = \frac{ \sum_{s \in W}
c(s\lambda) e^{i s\lambda(H)}}{\Pi_{\alpha \in Q}(e^{\alpha(H)}-
e^{-\alpha(H)})} $$
where $ W $ is the Weyl group, $ c $ is the Harish-Chandra $c-$function and
$ Q $ is the set of positive roots. The heat kernel is given by
$$ \gamma_t^1(\exp(H)) = C_t e^{-t|\rho|^2}\Pi_{\alpha \in Q} \frac{\alpha(H)}
{(e^{\alpha(H)}-e^{-\alpha(H)})} e^{-\frac{1}{4t}|H|^2}.$$
These two results are proved in Gangolli \cite{G}; see also Helgason
\cite{H}. Defining
$ \pi(\lambda) = \Pi_{\alpha \in Q}\alpha(H_\lambda) $ where $ H_\lambda \in
i\a $ corresponds to $ \lambda $ we have the simple formula
$  c(\lambda) = \pi(\rho)/\pi(i\lambda).$

The Jacobian factor $ J_1(H) $ appearing in the integration formula for $ Y =
G/K $ is also expressible in terms of the roots $ \alpha \in Q.$ Thus it can
be checked that
$$ \int_{i\a} g_0(\exp(H))\gamma_{2t}^1(\exp(H))
\psi_{-i(\lambda+\rho)}(\exp(H) J_1(H) dH $$
$$ = C_t e^{-t|\rho|^2} \sum_{s \in W} c(-is(\lambda +\rho))
\int_{i\a} g_0(\exp(H))
 e^{s(\lambda + \rho)(H)}\pi(H) e^{-\frac{1}{8t}|H|^2} dH.$$
Note that when $ g_0 = 1 $ the integral
$$ \int_{i\a} \gamma_{2t}^1(\exp(H))
\psi_{\lambda}(\exp(H)) J_1(H) dH $$
reduces to
$$ e^{-2t|\rho|^2} \sum_{s \in W} c(s\lambda)
\int_{i\a} \pi(H) e^{is\lambda(H)} e^{-\frac{1}{8t}|H|^2} dH $$
$$ = e^{-2t|\rho|^2} \left(\sum_{s \in W} c(s\lambda)\pi(is\lambda)\right)
e^{-2t|\lambda|^2}= C_t
e^{-2t(|\lambda|^2 +|\rho|^2)} $$ which is the defining relation for the heat
kernel.

\begin{thm} Let $ T_g $ be a Toeplitz operator on the Segal-Bargmann space
associated to a compact Lie group $ U $ where $ g(u\exp(H).o) = g_0(\exp(H)),
u \in U, H \in i\a.$ Then $ T_g $ is bounded if and only if
$$  |\int_{i\a} g_0(\exp(H))e^{(\lambda + \rho)(H)}
\pi(H) e^{-\frac{1}{8t}|H|^2} dH | \leq C_t |\pi(i(\lambda +\rho))|
e^{2t|\lambda+\rho|^2}.$$
\end{thm}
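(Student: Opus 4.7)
The plan is to invoke the preceding Corollary, which characterises boundedness of $T_g$ by the estimate
$$\left| \int_{i\a} g_0(\exp(H))\gamma_{2t}^1(\exp(H))\psi_{-i(\lambda+\rho)}(\exp(H)) J_1(H)\, dH \right| \leq C e^{2t|\lambda+\rho|^2},$$
and to simplify the left-hand side using the explicit Weyl character formula, the explicit heat kernel on $G$, and the identity $c(\mu) = \pi(\rho)/\pi(i\mu)$. The calculation already carried out in the text rewrites this integral as
$$C_t\, e^{-t|\rho|^2} \sum_{s\in W} c(-is(\lambda+\rho))\, I_s(\lambda),$$
where $I_s(\lambda) = \int_{i\a} g_0(\exp(H))\, e^{s(\lambda+\rho)(H)}\, \pi(H)\, e^{-\frac{1}{8t}|H|^2}\, dH$. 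So the work is to reduce this Weyl group sum to a multiple of the single integral appearing in the theorem.

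The key observation is that all $|W|$ terms in this sum are equal. Writing $I(\lambda) = I_1(\lambda)$ for the canonical integral in the theorem statement, I would make the change of variable $H \mapsto s^{-1}H$ in $I_s(\lambda)$: the factor $g_0(\exp(H))$ is Weyl-invariant by $K$-biinvariance, $s(\lambda+\rho)(s^{-1}H) = (\lambda+\rho)(H)$, the Gaussian $e^{-\frac{1}{8t}|H|^2}$ is Weyl-invariant, and $\pi(s^{-1}H) = \det(s)^{-1}\pi(H)$ by the Weyl anti-invariance of $\pi$. Hence $I_s(\lambda) = \det(s)^{-1}\, I(\lambda)$. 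On the other hand, directly from $c(\mu) = \pi(\rho)/\pi(i\mu)$,
$$c(-is(\lambda+\rho)) = \frac{\pi(\rho)}{\pi(s(\lambda+\rho))} = \det(s)^{-1}\,\frac{\pi(\rho)}{\pi(\lambda+\rho)}.$$
Since $\det(s)^{-2} = 1$ for every $s \in W$, each summand contributes $\pi(\rho)\pi(\lambda+\rho)^{-1}\, I(\lambda)$, and the entire sum equals
$$|W|\, C_t\, e^{-t|\rho|^2}\, \frac{\pi(\rho)}{\pi(\lambda+\rho)}\, I(\lambda).$$

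Substituting back into the inequality from the Corollary and absorbing $|W|\, C_t\, e^{-t|\rho|^2}\,\pi(\rho)$ into a new constant, the boundedness criterion becomes
$$|I(\lambda)| \leq C_t'\, |\pi(\lambda+\rho)|\, e^{2t|\lambda+\rho|^2}.$$
Finally, because $\pi(H) = \prod_{\alpha\in Q}\alpha(H_\lambda)$ is a homogeneous polynomial of degree $|Q|$, one has $\pi(i\mu) = i^{|Q|}\pi(\mu)$, so $|\pi(i(\lambda+\rho))| = |\pi(\lambda+\rho)|$; replacing $|\pi(\lambda+\rho)|$ on the right by $|\pi(i(\lambda+\rho))|$ yields exactly the stated inequality. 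The main technical subtlety is bookkeeping the two sign factors $\det(s)^{-1}$ (one from the change of variables in $I_s$, one from the evaluation of the $c$-function) and confirming they cancel, since the identical calculation with a single sign factor would make the Weyl sum collapse to zero by anti-symmetry.
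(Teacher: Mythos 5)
Your proposal is correct and follows essentially the same route as the paper: invoke the preceding Corollary, substitute the explicit formulas for $\gamma_{2t}^1$, $\psi_{-i(\lambda+\rho)}$ and $c(\mu)=\pi(\rho)/\pi(i\mu)$ to arrive at the displayed Weyl-group sum, and then collapse that sum to a single term. The paper leaves the final collapse implicit, whereas you supply the sign bookkeeping (the cancellation of the two $\det(s)$ factors coming from the anti-invariance of $\pi$ under the change of variable and from the $c$-function), which is exactly the step needed.
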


Defining $ g_1(H) = g_0(\exp(H))\pi(H) $ the above condition can be put in
the form
$$ |\int_{i\a} g_1(H)e^{-\frac{1}{8t}|H-4t(\lambda+\rho)|^2} dH |
\leq C_t  |\pi(i(\lambda +\rho))| $$
for all $ \lambda \in \CP.$ This has an obvious resemblance with the
sufficient condition we obtained for the Fock spaces.

\begin{center}
{\bf Acknowledgments}
\end{center}
The second author is supported in part by J. C. Bose Fellowship from 
the Department of Science and Technology (DST).

\end{document}